\documentclass[11pt,reqno]{amsart}
\usepackage{amscd,amsmath,amssymb,latexsym,amsthm,amsfonts}
\usepackage[utf8]{inputenc}
\usepackage{enumerate,color}
\usepackage{tikz-cd}
\tikzset{smalltext/.style={"\textup{\small #1}" description}}

\DeclareMathAlphabet{\bsf}{OT1}{cmss}{bx}{n}

\usepackage{graphics}
\usepackage[all]{xy}

\def\qtq#1{\quad\text{#1}\quad}

\def\setof#1#2{\{#1 \, |\, #2\} }

\def\sue{\subseteq}

\def\2{\text{\bf 2}}

\def\of{\mathfrak o}
\def\cf{\mathfrak c}

\def\OS{\text{\sf O}}

\def\loc{\text{{\bf Loc}}}

\newcommand{\tbigwedge}{\mathop{\textstyle \bigwedge}}
\newcommand{\tbigcap}{\mathop{\textstyle \bigcap}}
\newcommand{\tbigcup}{\mathop{\textstyle \bigcup}}
\newcommand{\tbigvee}{\mathop{\textstyle \bigvee}}

\def\SL{\mathsf{S}}

\def\mo{\mathcal{O}}
\def\mc{\mathcal{C}}

\newtheorem{theo}{Theorem}[section]
\newtheorem{coro}[theo]{Corollary}
\newtheorem{lemm}[theo]{Lemma}
\newtheorem{prop}[theo]{Proposition}

\theoremstyle{definition}

\newtheorem{rema}[theo]{Remark}
\newtheorem{remas}[theo]{Remarks}
\newtheorem{defin}[theo]{Definition}

\begin{document}


\title{Fitness and subfitness via interior and closure operators}

\author[João Areias]{João Areias}

\address{\hspace*{-\parindent}Department of Mathematics, University of Coimbra,  3000-143 Coimbra,\newline Portugal \newline {\it Email address}: {\tt uc2020217955@student.uc.pt}}

\subjclass[2020]{18F70, 06D22, 54D10, 06B23}

\keywords{fitness, subfitness, closure operator, interior operator, supplements, locales}

\begin{abstract}
    {  We start by rephrasing some well-known characterizations of fit and subfit locales in terms of closure and interior operators on the system of sublocales. Using these characterizations, we are able to extend the notions of fitness and subfitness to a more general setting. This approach not only allows us to obtain more general results, but also to present new properties of fitness and subfitness (such as a formula for the computation of supplements of sublocales). Using this approach we were also able to simplify the proofs of some well-known results.}
\end{abstract}
\maketitle
\section{Introduction}
Subfitness and fitness are two separation conditions that play a crucial role in point-free topology. The main goal of this paper is to present a new approach to these topics, which will enable us to obtain more general results. This approach will also allow us to present shorter and more direct proofs of some well-known results than those typically found in the literature. In fact, the majority of our proofs are nothing more than direct computations.

We start by rephrasing some well-known characterizations of subfitness and fitness of a locale $L$ in terms of closure and interior operators on the coframe $\SL(L)$ of the sublocales of $L$.

These characterizations motivate the introduction of the notions of lifted-subfit and lifted-fit coframes. These are the coframes that behave similarly to $\SL(L)$ when $L$ is a subfit or a fit locale, respectively. We are then able to extend some well-known results from locales to this more general setting.

As a consequence of the general theory we have developed, we derive some new results about separation on locales. The most relevant of them is Proposition \ref{prop sup em sl}, where we give a formula for the supplement of sublocales in terms of their associated nuclei.

Our approach may also have some interest from a logical standpoint because, unlike the proofs usually found in the literature,  we never need to proceed by contradiction.

This paper is structured as follows. In Section \ref{preliminares} we recall some facts we will later need, and characterize  subfit and fit frames in terms of closure and interior operators on $\SL(L)$. In Section \ref{sec oclo cint} we establish  the setting in which we will work for the remainder of the paper,  as well as some basic properties of our operators. In Section \ref{sec lifted subfit} we introduce the notion of lifted-subfit coframe. In Section \ref{sec consequencias lifte subfit} we study the consequences of lifted-subfitness, namely a formula for the complements in $\SL(L)$ and a generalization of Joyal-Tierney's Theorem. In Section \ref{sec lifted fit} we introduce the notion of lifted-fit coframe. In Section \ref{sec heredi} we discuss how lifted-subfit and lifted-fit coframes may be characterized by the behavior of their principal down-sets. We finish by studying the separation conditions on zero-dimensional coframes.

\section{Preliminaries}\label{preliminares}
\subsection{Frames and Locales}
In this section, we recall some relevant notions and facts about frames and locales that we will need. They are all taken from \cite{PP}.

      A \emph{frame} (also known as a \emph{locale}) is a complete lattice $L$ satisfying the infinite distributivity law
    $$a\wedge \tbigvee B=\tbigvee \{a\wedge b\mid b\in B\}$$
    for all $a\in L$ and $B\subseteq L$. The dual of a frame is a {\em coframe} that is, a complete lattice $K$ satisfying
    $$a\vee \tbigwedge B=\tbigwedge \{a\vee b\mid b\in B\}$$
    for all $a\in K$ and $B\subseteq K$.

Every frame  has a {\em Heyting structure} 
with the Heyting operation $\to$ satisfying
\[
a\wedge b\leq c \qtq{iff} a\leq b\to c
\]
where $b\to c=\tbigvee\{a\in L\mid a\wedge b\leq c\}$.

\subsection{Sublocales}

    A {\em sublocale} of a locale $L$ is a subset $S\sue L$ that satisfies the following conditions:
\begin{enumerate}[(Sl1)]
\item
$S$ is a {\em meet-subset}, that is, for every $M \subseteq S$, $\tbigwedge M\in S$.
\item
For every $s\in S$ and every $x\in L$, $x \to s\in S$.
\end{enumerate}

The system $\SL(L)$ of all sublocales of $L$ (partially ordered by inclusion) is a coframe  with a fairly transparent structure (see \cite{PP}, pp. 28-29):

\begin{equation*}\label{joinsub}
\tbigwedge_i S_i
= \tbigcap_i S_i
\qtq{and} \tbigvee_i S_i
= \setof{\tbigwedge M}{M\subseteq \tbigcup_i S_i}.
\end{equation*}

The bottom element of $\SL(L)$ is the subset of $L$ that contains only the top element $1$ and is denoted by $\OS$.

Given a locale $L$, one can associate each of its elements $a\in L$ with the following sublocales of $L$:
\[
\of a=\setof {a\to x}{x\in L} \qtq{and} \cf a=\{x\in L\mid x\ge a\}.
\]

These sublocales  are called \emph{open sublocales} and \emph{closed sublocales} respectively. The system of all open sublocales of $L$ is denoted by $\of L$, and the system of all closed sublocales of $L$ is denoted by $\cf L$.
Moreover,
\[
\begin{aligned}
&\of(0)=\OS,\; \of(1)=L,\ \ \of a\cap\of b=\of(a\wedge b) \ \text{ and }\ \tbigvee_i\of a_i=\of(\tbigvee_i a_i),\\
&\cf(1)=\OS,\; \cf(0)=L,\ \ \cf a\vee\cf b=\cf(a\wedge b) \ \text{ and }\ \tbigcap_i\cf a_i=\cf(\tbigvee_i a_i).
\end{aligned}
\]

For each $S\in \SL (L)$ one defines
\begin{itemize}
    \item the interior of $S$: $S^\circ=\tbigvee \{\of a \in \of L\mid \of a\sue S\}$;
    \item the closure of $S$: $S^-=\tbigcap \{\cf a \in \cf L \mid S\sue \cf a\}$.
\end{itemize}

\subsection{Localic maps}

    A map $h\colon M\to L$ between frames is a \emph{frame homomorphism} if it has a right adjoint and preserves  finite meets.
    A map $f\colon L\to M$ between locales is a \emph{localic map} if it has a left adjoint $f^*$ that preserves finite meets (thar is, localic maps are the right adjoints of frame homomorphisms).

The category of locales and localic maps is denoted by $\loc$.

Let $f\colon L\to M$ be a localic map, and $S\sue L$ and $T\sue M$ be sublocales.
Then the set-theoretic image $f[S]$ is a sublocale of $M$, whereas the set-theoretic preimage $f^{-1}[T]$ is not, in general, a sublocale of $L$. Nevertheless, there exists the  largest sublocale contained in $f^{-1}[T]$, called the {\em localic preimage}
 of $T$ and denoted  by $f_{-1}[T].$
 
 There is the obvious   adjunction
\[
f[S]\sue T \Leftrightarrow S\sue f_{-1}[T].
\]
Moreover
$$f_{-1}[\of b]=\of (f^\ast (b)) \qtq{ and }f_{-1}[\cf b]=\cf (f^\ast (b)).$$

 For more information on frames and locales, we refer to \cite{PP}.

\subsection{Supplements}
In this section we recall some basic facts about supplements. All the results are taken from \cite{remainders}.

    Given a coframe $K$ and $a\in K$, the \emph{supplement} of $a$ is the element
    $$a^\#=\tbigwedge \{y\in K \mid a\vee y= 1\}.$$

The supplement satisfies the following properties:
    \begin{enumerate}
        \item $a^\#\le b \Leftrightarrow a\vee b=1\Leftrightarrow b^\#\le a$.
        \item $a\le b \Rightarrow b^\#\le a^\#$.
        \item $a^{\#\#}\le a$.
        \item $a^\#=a^{\#\#\#}$.
        \item $0^\#=1$ and $1^\#=0$.
        \item $(\tbigwedge a_i)^\#=\tbigvee a_i^\#$.
    \end{enumerate}

Of course $a \vee a^\#=1$, while $a\wedge a^\#\ge 0$, in general. Whenever $a\wedge a^\#=0$ it is said that $a$ is complemented. 
If $a,b\in K$ are complemented, then they also satisfy the following:
    \begin{enumerate}
        \item [(7)] $a\le b^\# \Leftrightarrow b\le a^\#.$
        \item [(8)]$a^{\#\#}= a$.
    \end{enumerate}

    If $L$ is a locale, then for every $a\in L$, the sublocales $\of a$ and $\cf a$ are complemented to each other in the coframe $\SL (L)$. Therefore $S^\circ=S^{\# -\#}$ for every $S\in\SL (L)$.

\subsection{Separation on Locales}

In this section we recall some basic facts about subfit and fit locales. All the results are taken from \cite{PPsep}.
 
    Recall that a locale $L$ is said to be \emph{subfit} whenever
    \begin{center}
        if $a\not \leq b$, then there exists $c\in L$ such that $a\vee c=1\not =b\vee c$.
    \end{center}

Subfit locales can be characterized by the properties of their sublocales in the following ways.

\begin{prop}
The following conditions on a locale $L$  are equivalent.
\begin{enumerate}
    \item $L$ is subfit.
    \item Every open sublocale of $L$ is a join of closed ones.
    \item For every sublocale $S\not = L$ there exists an $\of a$ such that
    $$S\sue \of a \subset L.$$
\end{enumerate}
\end{prop}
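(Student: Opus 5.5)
I will prove (1)$\Rightarrow$(2)$\Rightarrow$(3)$\Rightarrow$(1). Throughout I use the facts from the preliminaries: $\cf a\sue\cf b$ iff $b\le a$; $\of a\sue\of b$ iff $a\le b$; $\of a\cap\cf b=\OS$ iff $a\le b$ (which follows from complementarity, i.e.\ $\cf b=(\of b)^\#$ and $\of b$ is complemented); and joins of closed sublocales are computed as in $\SL(L)$.

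For (1)$\Rightarrow$(2), fix $a\in L$ and let $T=\tbigvee\{\cf c\mid \cf c\sue\of a\}$. Then $T\sue\of a$, and it suffices to show $\of a\sue T$. Elements of $\of a$ have the form $a\to x$, and $\cf c\sue\of a$ holds iff $\cf c\cap\cf a=\OS$, i.e.\ iff $a\vee c=1$. Let $s\in\of a$ and put $t=\tbigwedge\{y\in T\mid y\ge s\}\in T$, so $s\le t$. If $t\not\le s$, subfitness gives $c$ with $t\vee c=1\ne s\vee c$. The plan is to modify $c$ so that $a\vee c=1$ while keeping $s\vee c\neq 1$. Since $s=a\to x$, replace $c$ by $c'=(c\vee s)$-type adjustments, or more cleanly apply subfitness to the pair $t\wedge a\not\le s$ (note $t\wedge a\le s$ would give $t\le a\to s=s$). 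I expect this bookkeeping to be the main obstacle. An alternative is to argue directly with the element $s\vee c$: it lies in $\cf c\sue\ldots$, and showing it lies in $T$ would yield $s\vee c\ge t$, hence $s\vee c=1$, a contradiction.

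A cleaner route for (1)$\Rightarrow$(2) is this: for each $c$ with $a\vee c=1$ we have $\cf c\sue\of a$. If $s\in\of a$, $s\ne1$, is not in $T$, then, since $T$ is closed under meets, $s$ is not the meet of the elements of $T$ above it. So some $y\in T$... I would settle the details by first proving the auxiliary claim that $\of a=\tbigvee\{\cf(a\to x)\mid\ldots\}$ for suitable $x$, or that $s=\tbigwedge\{\,\text{elements }y\ge s\text{ with }y\in\cf c,\ a\vee c=1\}$, using subfitness on $t\not\le s$.

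For (2)$\Rightarrow$(3), let $S\ne L$. Since $L=\tbigvee_{x\in S}\ldots$ does not hold directly, I would instead pick $x\in L\setminus S$ and use $S^-$. Concretely, it is easier to show (2)$\Rightarrow$(1) and then (1)$\Rightarrow$(3). For (1)$\Rightarrow$(3): since $S\ne L$, we have $0\notin S$ (as $\of 1\ldots$), so some $b$ exists with $\cf b\not\sue S$... Instead I plan to use $S\ne L$ to obtain some $a\not\le b$ witnessed through the nucleus of $S$, namely $\nu_S(b)\ne b$. Subfitness on $\nu_S(b)\not\le b$ then gives $c$ with $\nu_S(b)\vee c=1\ne b\vee c$, and I take $\of c'$ for a suitable $c'$ with $S\sue\of c'\ne L$.

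For (3)$\Rightarrow$(1): given $a\not\le b$, apply (3) to $S=\of a\vee\cf b$. If $S=L$, then, since $\cf b\vee\of a\ge \cf b\vee\ldots$, complementation yields $\cf a\sue\cf b$, i.e.\ $b\ge a$, a contradiction. Otherwise $S\sue\of c\ne L$, and we read off $a\le c$ and $\cf b\sue\of c$, i.e.\ $b\vee c=1$. Then replace $c$ by the element giving $a\vee c=1\ne b\vee c$; this role swap comes from choosing $S=\cf a\vee\of b$ instead. I would check which choice works by the same complement computations.
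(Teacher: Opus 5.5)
The paper does not prove this proposition; it quotes it from \cite{PPsep}. Your proposal therefore has to stand on its own, and as written it does not establish the equivalence. After dropping (2)$\Rightarrow$(3) you announce ``(2)$\Rightarrow$(1) and then (1)$\Rightarrow$(3)'', but (2)$\Rightarrow$(1) never appears. Condition (2) is thus never shown to imply anything, and what remains is at most (1)$\Rightarrow$(2), (1)$\Rightarrow$(3) and (3)$\Rightarrow$(1). The missing implication is short. If $a\not\le b$ then $\of a\not\sue\of b$. By (2), $\of a$ is the join of the closed sublocales $\cf c\sue\of a$, so some such $\cf c$ is not contained in $\of b$. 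As you noted, $\cf c\sue\of a$ means $a\vee c=1$, and likewise $\cf c\not\sue\of b$ means $b\vee c\ne 1$.

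The other three implications also need repair.

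In (3)$\Rightarrow$(1), the choice $S=\of a\vee\cf b$ fails. We have $\of a\vee\cf b=L$ iff $\cf a=(\of a)^\#\sue\cf b$ iff $b\le a$ (you reversed this order), which contradicts nothing. In the remaining case you obtain $a\vee c=c\ne 1=b\vee c$, the wrong way round. Take $S=\cf a\vee\of b$ instead. Then $S=L$ iff $\of a=(\cf a)^\#\sue\of b$ iff $a\le b$, so $S\ne L$. Now (3) gives $c\ne 1$ with $\cf a\sue\of c$, i.e.\ $a\vee c=1$, and $\of b\sue\of c$, i.e.\ $b\vee c=c\ne 1$.

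In (1)$\Rightarrow$(3), the assertion that $S\ne L$ forces $0\notin S$ is false: for $a\ne 1$ with $a\to 0=0$, the proper sublocale $\of a$ contains $0$. The nucleus route does work, with your unspecified $c'$ taken to be $b\vee c$. Indeed $b\vee c\ne 1$ and $\nu_S(b\vee c)\ge\nu_S(b)\vee c=1$, so $S\sue\of(b\vee c)\subset L$.

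Finally, (1)$\Rightarrow$(2) is correct once you assemble your own ingredients. Applying subfitness to $t\wedge a\not\le s$ gives $c$ with $(t\wedge a)\vee c=1\ne s\vee c$. Then $a\vee c=1=t\vee c$, so $\cf c\sue T$ and hence $s\vee c\in T$. Since $s\vee c\ge s$, this gives $s\vee c\ge t$, and therefore $s\vee c\ge t\vee c=1$, a contradiction. Thus $s=t\in T$.
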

\medskip

    A locale $L$ is said to be \emph{fit} whenever
        \begin{center}
        if $a\not \leq b$ then there exists $c\in L$ such that $a\vee c=1$ and $ c\to b\not =b.$
    \end{center}

Fit locales can be characterized by the properties of their sublocales in the following ways.

\begin{prop}
The following conditions on a locale $L$  are equivalent.
    \begin{enumerate}
        \item $L$ is fit.
        \item Every closed sublocale of $L$ is an intersection of open ones.
        \item Every sublocale of $L$ is an intersection of open ones.
    \end{enumerate}
\end{prop}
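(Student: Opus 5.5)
We will prove the cycle (1)$\Rightarrow$(2)$\Rightarrow$(3)$\Rightarrow$(1). Throughout we use two facts: $\of c$ and $\cf c$ are complements of each other in $\SL(L)$, and $\of c=\{y\in L\mid c\to y=y\}$. The latter holds because $c\to(c\to y)=c\to y$, and conversely $y=c\to y$ exhibits $y$ in the form $c\to x$.

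(3)$\Rightarrow$(2) is immediate.

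(2)$\Rightarrow$(1). Let $a\not\le b$. Then $b\notin\cf a$. Since $\cf a$ is an intersection of open sublocales, there is some $\of c\supseteq\cf a$ with $b\notin\of c$. By the description of $\of c$ above, $b\notin\of c$ means $c\to b\neq b$. Because $\cf c$ is the complement of $\of c$, the inclusion $\cf a\sue\of c$ gives $\cf a\cap\cf c=\OS$, i.e. $\cf(a\vee c)=\cf(1)$. Since $\cf$ is injective, $a\vee c=1$. Thus $c$ witnesses fitness.

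(1)$\Rightarrow$(2). Reverse the previous argument. Take $x\notin\cf a$, i.e. $a\not\le x$. Fitness yields $c$ with $a\vee c=1$ and $c\to x\neq x$. Then $\cf a\cap\cf c=\cf(a\vee c)=\OS$. Hence $\cf a$ lies below the complement of $\cf c$, which is $\of c$ (complemented elements of a coframe give $u\wedge v=0\Rightarrow u\le v^{\#}$, and here $\cf c^{\#}=\of c$). On the other hand $x\notin\of c$, since $c\to x\neq x$. So the intersection of all open sublocales containing $\cf a$ excludes every point outside $\cf a$, and therefore equals $\cf a$.

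(2)$\Rightarrow$(3). I will invoke the standard fact from \cite{PP} that every sublocale $S$ is an intersection of sublocales of the form $\of a\vee\cf b$, namely those containing $S$. Writing $\cf b=\tbigcap_i\of c_i$ by (2), the coframe distributivity of $\SL(L)$ gives
\[
\of a\vee\tbigcap_i\of c_i=\tbigcap_i(\of a\vee\of c_i)=\tbigcap_i\of(a\vee c_i).
\]
Hence each $\of a\vee\cf b$ is an intersection of opens, and so is $S$.

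The only step that needs external input is this representation $S=\tbigcap\{\of a\vee\cf b\mid S\sue\of a\vee\cf b\}$. If it has to be justified from scratch, I would argue as follows. Let $x\notin S$ and let $\nu_S$ be the nucleus of $S$. Then $s=\nu_S(x)\in S$ and $x<s$. Since $S\cap\cf s\sue\{s\to s\}$... but the cleaner route is to check directly that $S\sue\of s\vee\cf s'$ for a suitable pair while $x$ is excluded. I expect this verification to be the main obstacle, and I would rely on the cited result from \cite{PP} rather than reprove it.
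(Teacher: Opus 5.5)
Your proof is correct. The paper gives no proof of this proposition; it is recalled from \cite{PPsep}.

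\textbf{(2)$\Leftrightarrow$(3).} The paper's own machinery recovers this as the case $K=\SL(L)$, $\mo=\of L$ of the last proposition of Section \ref{sec 0dim}, where it reads quasi lifted-fit $\Leftrightarrow$ lifted-fit. The paper's argument there is the same as your (2)$\Rightarrow$(3):
\begin{itemize}
\item write $S$ as a meet of elements $\of a\vee\cf b$, using Proposition \ref{prop zero dim of sl};
\item observe that each such element is fixed by the $\of$-closure, because that closure preserves binary joins (Proposition \ref{prop sim distr}).
\end{itemize}
Your identity $\of a\vee\tbigcap_i\of c_i=\tbigcap_i\of(a\vee c_i)$ is the same coframe-distributivity fact that underlies Proposition \ref{prop sim distr}.

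\textbf{(1)$\Leftrightarrow$(2).} Your argument rests on $\cf a\sue\of c\iff a\vee c=1$ and on $x\in\of c\iff c\to x=x$. It is the standard pointwise verification, which the paper leaves to the reference, and it is complete as written.

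\textbf{Your last paragraph.} The abandoned sketch is meaningless and should be deleted; in particular, the inclusion $S\cap\cf s\sue\{s\to s\}$ makes no sense, since $s\to s=1$. Citing the representation is legitimate, because the paper itself records it as Proposition \ref{prop zero dim of sl}. If you want a self-contained version, it has a short proof.
\begin{itemize}
\item For $s\in S$ and $x\in L$ we have $x\to s\in S$. Hence $(x\to s)\wedge\nu_S(x)=\nu_S(x\wedge(x\to s))\le s$.
\item It follows that $s=(x\to s)\wedge(\nu_S(x)\vee s)$, so $s\in\of x\vee\cf\nu_S(x)$.
\item Conversely, suppose $y\in\of y\vee\cf\nu_S(y)$. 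Then $y=\tbigwedge M$ with $M\sue\of y\cup\cf\nu_S(y)$.
\item Every element of $M\cap\of y$ has the form $y\to z\ge y$, so it equals $1$. Hence $y\ge\nu_S(y)$, i.e.\ $y\in S$.
\end{itemize}
This gives $S=\tbigcap_{x\in L}(\of x\vee\cf\nu_S(x))$.
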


These characterizations motivate the following definitions.

\begin{defin}\label{box and sim in loc}
    For each $S\in \SL (L)$ we define
\begin{itemize}
    \item the $\cf$-interior of $S$: $S^\Box=\tbigvee \{\cf a \in \cf L\mid \cf a\sue S\}$;
    \item the $\of$-closure of $S$: $S^\sim=\tbigcap \{\of a \in \of L \mid S\sue \of a\}.$\footnote{The $\of$-closure has also been called the \emph{the other closure}  and has been used to study the complete sublocales (see, \cite{CPP}).}
\end{itemize}
\end{defin}
It is easy to verify that the $\cf$-interior and the $\of$-closure are an interior and a closure operator on $\SL(L)$, respectively.

Using these operators, we can rewrite the previous characterizations of subfit and fit locales in the following ways:

\begin{coro}\label{coro subfit locales}
The following conditions on a locale $L$  are equivalent :
    \begin{enumerate}
    \item $L$ is subfit.
    \item For every $a\in L$, $\of a^\Box=\of a$. 
    \item For every $S\in \SL (L)$, $S^\sim =L$ if and only if $S=L.$
\end{enumerate}
\end{coro}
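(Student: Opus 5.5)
The plan is to reduce the corollary directly to the unnumbered proposition characterizing subfit locales, translating each of its conditions (2) and (3) into the language of the operators of Definition \ref{box and sim in loc}.

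For (1)$\Leftrightarrow$(2), first note that $\of a^\Box\subseteq \of a$ always holds, since $^\Box$ is an interior operator; indeed $\of a^\Box$ is a join of sublocales contained in $\of a$. Hence $\of a^\Box=\of a$ holds if and only if $\of a\subseteq \tbigvee\{\cf b\mid \cf b\subseteq \of a\}$. This in turn holds if and only if $\of a$ is a join of closed sublocales. For one direction, if $\of a=\tbigvee_i \cf b_i$, then each $\cf b_i\subseteq \of a$, so $\of a\subseteq \of a^\Box$. The other direction is immediate, because $\of a^\Box$ is by definition a join of closed sublocales. Quantifying over all $a\in L$ then gives exactly condition (2) of the proposition, so (1)$\Leftrightarrow$(2).

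For (1)$\Leftrightarrow$(3), note first that $S=L$ always implies $S^\sim=L$, because $S\subseteq S^\sim$. So condition (3) says precisely that $S\neq L$ implies $S^\sim\neq L$. Since $S^\sim=\tbigcap\{\of a\mid S\subseteq \of a\}$ and $\of$ preserves joins, this intersection is again an open sublocale. Indeed, $\tbigcap_i\of a_i$ need not be open in general, but here we do not need that. We only need that $S^\sim\neq L$ if and only if there is some $\of a$ with $S\subseteq \of a\neq L$. In the forward direction, if every open $\of a\supseteq S$ equals $L$, then the intersection is $L$. Conversely, if some $\of a\supseteq S$ is proper, then $S^\sim\subseteq \of a\subset L$. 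So (3) of the corollary is equivalent to (3) of the proposition, and hence to (1).

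No step seems a genuine obstacle. The only point needing care is in the (3) equivalence: one must avoid claiming that $S^\sim$ is open, and argue with the proper open sublocale $\of a$ containing $S$ instead.
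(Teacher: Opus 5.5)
Your argument is correct and is essentially the paper's: there the corollary is obtained simply by rewriting conditions (2) and (3) of the preceding proposition in terms of $^\Box$ and $^\sim$, exactly as you do. One cleanup: delete the sentence claiming that $S^\sim=\tbigcap\{\of a\mid S\subseteq\of a\}$ is ``again an open sublocale'' because $\of$ preserves joins. Infinite intersections of open sublocales need not be open, and preservation of joins says nothing about intersections; your argument never uses this claim, and it contradicts your very next sentence.
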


\begin{coro}\label{coro fit locales}
    The following conditions on a locale $L$  are equivalent:
    \begin{enumerate}
        \item $L$ is fit.
        \item For every $a\in L$, $\cf a^\sim=\cf a$. 
        \item For every $S\in \SL(L)$, $S^\sim =S.$
    \end{enumerate}
\end{coro}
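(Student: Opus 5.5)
The statement to prove is Corollary \ref{coro fit locales}. The plan is to read it off the preceding proposition on fit locales, which lists (1) fit, (2) every closed sublocale is an intersection of open ones, and (3) every sublocale is an intersection of open ones. I will show that conditions (2) and (3) of the corollary are exactly conditions (2) and (3) of that proposition, rewritten with the $\of$-closure. The link is one general observation: a sublocale $S$ is an intersection of open sublocales if and only if $S^\sim=S$.

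First I check that $^\sim$ is a closure operator on $\SL(L)$.
\begin{itemize}
\item Every $\of a$ in the defining family contains $S$, so $S\sue S^\sim$.
\item If $S\sue T$, every open sublocale containing $T$ also contains $S$, so $S^\sim\sue T^\sim$.
\item $S^\sim$ is an intersection of open sublocales, and each of them contains $S$.
\end{itemize}
I use the third point in both directions of the observation.

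For the forward direction, suppose $S=\tbigcap_i \of a_i$. Each $\of a_i$ contains $S$, so it belongs to the family defining $S^\sim$. Hence $S^\sim\sue \tbigcap_i\of a_i=S$, and with $S\sue S^\sim$ we get $S^\sim=S$. Conversely, if $S^\sim=S$, then $S$ is by definition the intersection of the open sublocales containing it. There is one degenerate case: if no such family were empty the intersection would be $L$, but $L=\of 1$ always contains $S$, so the family is never empty. This proves the observation.

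Applying the observation to $S=\cf a$ shows that condition (2) of the corollary is equivalent to condition (2) of the proposition. Applying it to an arbitrary $S$ shows that condition (3) of the corollary is equivalent to condition (3) of the proposition. The proposition makes both equivalent to (1), and this completes the proof.

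There is no genuine obstacle here, since the real content is already in the cited proposition. The only points that need care are the closure-operator properties of $^\sim$ and the fact that the family defining $S^\sim$ is never empty.
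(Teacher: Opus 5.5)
Your proposal is correct and is essentially the paper's argument: the paper obtains this corollary by rewriting the cited characterization of fit locales (every closed sublocale, resp.\ every sublocale, is an intersection of open ones) in terms of the $\of$-closure, which is exactly your observation that $S$ is an intersection of open sublocales if and only if $S^\sim=S$. A small wording fix: ``if no such family were empty'' should read ``if such a family were empty''; the point is harmless in any case, since $L=\of 1$ always belongs to the family.
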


The main goal of this paper is to show that, by characterizing subfitness and fitness by way of the $\cf$-interior and $\of$-closure, we can not only extend a lot of results known for the case of locales to a more general setting (as specified in the following section), but also present simpler proofs of well-known results about subfit and fit locales.

\section{The setting: The $\mo$-closure and $\mc$-interior}\label{sec oclo cint}
Throughout this paper we will always consider the following setting:
\medskip
\begin{equation*}
    \begin{tabular}{|p{9cm}|}
\hline
\begin{itemize}
    \item $K$ is an arbitrary coframe.
    \item $\mo\sue K$ such that every $a\in\mathcal{O}$ has a complement  in $K$.
    \item $\mathcal{O}$ is closed under finite joins and  $1\in \mo$.
    \item $\mathcal{C}=\{a^\#\mid a\in \mathcal{O}\}$.

\end{itemize}
\\
\\
    \hline
    \end{tabular}
\end{equation*}
\medskip

We start by generalizing the notions of $\cf$-interior and $\of$-closure to this new setting.
\begin{defin}
    For each $s\in K$ we define
\begin{itemize}
    \item the $\mc$-interior of $s$: $s^\Box=\tbigvee \{c\in \mc \mid c\leq s\};$
    \item the $\mo$-closure of $s$: $s^\sim=\tbigwedge \{a\in \mo \mid s\leq a\}.$
\end{itemize}
\end{defin}

Observe that the $\mc$-interior and the $\mo$-closure are  an interior and  a closure operator on $K$ respectively.
\begin{rema}
 Note that if $L$ is a locale, $K=\SL (L)$ and $\mo=\of L$, then the $\mc$-interior and the $\mo$-closure are just the operators defined in \ref{box and sim in loc}.
    
    Whenever we are working in the coframe $\SL(L)$,  we will always consider the  $\mc$-interior and the $\mo$-closure defined in \ref{box and sim in loc}, unless explicitly said otherwise.
\end{rema}

We now present some basic properties of these operators.
\begin{prop}\label{boxsim}
For every $s\in K$,
    $s^{\#\sim\#} =s^{\Box}.$
\end{prop}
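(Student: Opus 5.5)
We prove $s^{\#\sim\#}=s^\Box$ by computing the supplement of the meet defining $s^{\#\sim}$. By definition,
\[
s^{\#\sim}=\tbigwedge\{a\in\mo\mid s^\#\le a\}.
\]
Property (6) of supplements, $(\tbigwedge a_i)^\#=\tbigvee a_i^\#$, then gives
\[
s^{\#\sim\#}=\tbigvee\{a^\#\mid a\in\mo,\ s^\#\le a\}.
\]
Every $a^\#$ here lies in $\mc$ by definition of $\mc$. So it remains to show that, as $a$ ranges over $\mo$, the condition $s^\#\le a$ is equivalent to $a^\#\le s$. Once that holds, the index set is exactly $\{c\in\mc\mid c\le s\}$, because every $c\in\mc$ has the form $a^\#$ with $a\in\mo$. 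The join is then $s^\Box$.

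The equivalence follows from property (1): $s^\#\le a$ iff $s\vee a=1$ iff $a^\#\le s$. Property (1) holds for arbitrary elements of a coframe, so complementedness of $s$ is not needed.

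The only point needing care is the reindexing. We must check that the set $\{a^\#\mid a\in\mo,\ a^\#\le s\}$ equals $\{c\in\mc\mid c\le s\}$. This is immediate from $\mc=\{a^\#\mid a\in\mo\}$, and possible repetitions ($a^\#=b^\#$ for distinct $a,b$) do not affect the join.

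We also check the edge case of the empty meet. If no $a\in\mo$ satisfies $s^\#\le a$, the meet is $1$ and its supplement is $1^\#=0$, which is the empty join. This case cannot actually occur, since $1\in\mo$, but the formula is consistent with it anyway.

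Hence $s^{\#\sim\#}=s^\Box$.
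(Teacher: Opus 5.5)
Your proof is correct and follows the paper's argument. The paper likewise uses property (6) to rewrite the supplement of $\tbigwedge\{a\in\mo\mid s^\#\le a\}$ as a join of supplements, then uses property (1) to reindex that join as $\tbigvee\{c\in\mc\mid c\le s\}=s^\Box$; you just spell out the reindexing and the empty-meet edge case, which the paper leaves implicit.
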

    \begin{proof}
        Notice that
    $s^{\#\sim\#}=(\tbigwedge\setof{a\in \mathcal{O}}{s^\# \leq a})^\#=\tbigvee\setof{c\in \mc}{c\leq s}=s^\square.$
    \end{proof}
    Applying the results from \cite{C} we have:
    \begin{coro}\label{sup}
 For every $s\in K$
    \begin{enumerate}
        \item $s^{\sim \# \Box}=s^{\sim \#}.$
        \item $s^\square=s^{\#\# \square}.$
        
        If, in addition, $s$ is complemented, then we have 
        
        \item $s^{\#\Box}=s^{\sim \#}.$
    \end{enumerate}
\end{coro}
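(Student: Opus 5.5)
The three identities should follow from the supplement calculus (1)--(8) of the preliminaries together with Proposition~\ref{boxsim}, rather than by importing \cite{C} as a black box. Throughout I will use that every $a\in\mo$ is complemented, so $a^\#$ is its complement. Hence each $c=a^\#\in\mc$ is also complemented (with complement $a$), and therefore $c^{\#\#}=c$ by (8).

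For (1), I first expand the closure. Since $s^\sim=\tbigwedge\setof{a\in\mo}{s\le a}$, property (6) gives
$$s^{\sim\#}=\tbigvee\setof{a^\#}{a\in\mo,\ s\le a}.$$
Every element $a^\#$ in this join lies in $\mc$, and each is $\le s^{\sim\#}$. Hence each such $a^\#$ belongs to the family whose join defines $(s^{\sim\#})^\Box$, and so $s^{\sim\#}\le s^{\sim\#\Box}$. The reverse inequality holds because $\Box$ is deflationary.

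For (2), the inequality $s^{\#\#\Box}\le s^\Box$ comes from $s^{\#\#}\le s$ (property (3)) and the monotonicity of $\Box$. For the other direction, I show that every $c\in\mc$ with $c\le s$ also satisfies $c\le s^{\#\#}$; then taking joins finishes. By (2), $c\le s$ gives $s^\#\le c^\#$, and applying (2) once more gives $c^{\#\#}\le s^{\#\#}$. Since $c$ is complemented, $c^{\#\#}=c$, so $c\le s^{\#\#}$.

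For (3), assume $s$ is complemented. Applying Proposition~\ref{boxsim} to the element $s^\#$ gives $s^{\#\Box}=s^{\#\#\sim\#}$. Since $s$ is complemented, $s^{\#\#}=s$ by (8), and this yields $s^{\#\Box}=s^{\sim\#}$.

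I do not expect a genuine obstacle. The only point needing care is in (2), where I must use that the elements of $\mc$ are complemented (so that $c^{\#\#}=c$), not merely that they are supplements. This is exactly where the standing assumption that every element of $\mo$ has a complement in $K$ is used.
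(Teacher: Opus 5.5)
Your proof is correct, and it takes a different route from the paper. The paper gives no argument of its own: it obtains the three identities by ``applying the results from \cite{C}'', i.e.\ by specializing results from Ciraulo's work on Kuratowski's problem. You instead verify everything directly from the supplement calculus and Proposition~\ref{boxsim}:
(1) holds because, by property (6), $s^{\sim\#}=\tbigvee\{a^\#\mid a\in\mo,\ s\le a\}$ is already a join of elements of $\mc$;
(2) holds because every $c\in\mc$ with $c\le s$ also satisfies $c\le s^{\#\#}$;
(3) follows by applying Proposition~\ref{boxsim} to $s^\#$ and cancelling $s^{\#\#}=s$.
The citation places the corollary inside a broader existing framework. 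Your route is self-contained, a few lines long, and shows exactly which hypotheses are actually used.

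On that last point your closing remark is slightly inaccurate. In (2) you do not need the elements of $\mc$ to be complemented: every $c=a^\#$ is a supplement, so $c^{\#\#}=a^{\#\#\#}=a^\#=c$ by property (4). More generally, your proofs of (1) and (2) never use that elements of $\mo$ are complemented, and neither does Proposition~\ref{boxsim}, which only needs supplement properties (1) and (6). Complementedness enters only through $s^{\#\#}=s$ in part (3).
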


\begin{prop}\label{prop sim distr}
    For every $s, t \in K$ 
    $(s\vee t)^\sim=s^\sim \vee t^\sim.$
\end{prop}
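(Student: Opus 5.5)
The inequality $s^\sim\vee t^\sim\ge (s\vee t)^\sim$ will come from $\mo$ being closed under binary joins, and the reverse inequality from the monotonicity of the closure operator $(\,\cdot\,)^\sim$.

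The reverse direction is immediate. Since $s\le s\vee t$ and $t\le s\vee t$, and $(\,\cdot\,)^\sim$ is monotone (it is a closure operator), we get $s^\sim\le (s\vee t)^\sim$ and $t^\sim\le(s\vee t)^\sim$. Hence $s^\sim\vee t^\sim\le (s\vee t)^\sim$.

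For the nontrivial inequality, we use coframe distributivity to expand the join of the two meets:
\[
s^\sim\vee t^\sim=\Bigl(\tbigwedge\{a\in\mo\mid s\le a\}\Bigr)\vee\Bigl(\tbigwedge\{b\in\mo\mid t\le b\}\Bigr)=\tbigwedge\{a\vee b\mid a,b\in\mo,\ s\le a,\ t\le b\}.
\]
This uses the law $x\vee\tbigwedge B=\tbigwedge\{x\vee b\mid b\in B\}$ twice: first with $x=s^\sim$, then, inside, with $x=b$. Note that both index sets are nonempty because $1\in\mo$; in any case the empty meet is $1$ and the formula still holds.

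Now take any such pair $a,b$. Since $\mo$ is closed under finite joins, $a\vee b\in\mo$, and $s\vee t\le a\vee b$. So $a\vee b$ belongs to the family whose meet defines $(s\vee t)^\sim$, which gives $(s\vee t)^\sim\le a\vee b$. Taking the meet over all such pairs yields $(s\vee t)^\sim\le s^\sim\vee t^\sim$.

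The only step needing any care is the double application of the infinite distributive law, which $K$ satisfies because it is a coframe. Complementedness of the elements of $\mo$ is not needed here.
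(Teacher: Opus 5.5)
Your proposal is correct and follows the same route as the paper: monotonicity of $(\cdot)^\sim$ gives $s^\sim\vee t^\sim\le(s\vee t)^\sim$, and coframe distributivity plus closure of $\mo$ under binary joins gives the reverse inequality. You also spell out the monotone direction and the double use of the distributive law, which the paper leaves implicit.
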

\begin{proof}
    It suffices to check that 
     $(s\vee t)^\sim\leq s^\sim \vee t^\sim.$
     
    Since $\mo$ is closed for finite joins, we have
    \begin{align*}
 s^\sim \vee t^\sim &=\tbigwedge\{a \in\mo \mid s\le a \} \vee \tbigwedge\{b \in\mo \mid t\le b \}\\
 &=\tbigwedge\{a\vee b \in\mo \mid s\le a, \;  t\le b \}\\
  &\geq\tbigwedge\{a\vee b \in\mo \mid s \vee t\le a \vee b \}=(s\vee t)^\sim.\qedhere
    \end{align*}
\end{proof}


\subsection{Associated systems}

To each of these operators, $\Box$ and $\sim$, we can associate the following subsets of $K$
$$K_\Box=\{s^\Box\mid s\in K\}\quad\mbox{ and }\quad K_\sim=\{s^\sim\mid s\in K\}.$$
 In the case where $L$ is a locale, $K=\SL(L)$ and $\mo=\of L$, it is usual to write \cite{PPsep} $K_\Box$ as $\SL_\cf(L)$ and $K_\sim$ as $ \SL_\of(L)$.

We will also consider
$K_\#=\{s^\#\mid s\in K\}=\{s^{\#\#}\mid s\in K\}.$

Whenever talking about $K_\Box$, $K_\sim$ and $K_\#$ we will always consider them to be endowed with the order inherited from $K$.

 The subset $K_\#$ is a Boolean algebra, and it is usually called the \textit{Booleanization} of $K$ (\cite{PP}, pp. 334).
 
\medskip

It is to check that:
\begin{prop}
    For every coframe $K$, $K_\sim$ is a sub-coframe of $K$.
\end{prop}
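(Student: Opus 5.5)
We need to show that $K_\sim$ is closed under arbitrary meets and finite joins in $K$, including the top and bottom. It then inherits the coframe distributivity law from $K$: for $a\in K_\sim$ and $B\subseteq K_\sim$, the element $a\vee\tbigwedge B=\tbigwedge\{a\vee b\mid b\in B\}$ is computed identically in $K$ and in $K_\sim$.

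First we identify $K_\sim$ with the fixed points of $\sim$. Since $\sim$ is a closure operator, $s^{\sim\sim}=s^\sim$, so $K_\sim=\{s\in K\mid s^\sim=s\}$.

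Next comes closure under arbitrary meets. This is the standard fact that the fixed points of a closure operator form a meet-closed subset. Let $s_i\in K_\sim$ and $s=\tbigwedge_i s_i$. Monotonicity gives $s^\sim\le s_i^\sim=s_i$ for each $i$, hence $s^\sim\le s$. Extensivity gives $s\le s^\sim$. So $s\in K_\sim$. The empty meet is $1$, and $1^\sim=1$ because $1\in\mo$. Alternatively, one can check directly that $K_\sim$ consists precisely of the meets of subsets of $\mo$: each $s^\sim$ is such a meet, and conversely if $s=\tbigwedge A$ with $A\subseteq\mo$, then every $a\in A$ is one of the $a$'s in the definition of $s^\sim$, so $s^\sim\le s$.

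Then comes closure under finite joins. For $s,t\in K_\sim$, Proposition \ref{prop sim distr} gives $(s\vee t)^\sim=s^\sim\vee t^\sim=s\vee t$. For the empty join, $0^\sim=\tbigwedge\{a\in\mo\}\ge 0$ need not equal $0$ in general. Instead, the bottom of $K_\sim$ is $\tbigwedge\mo=0^\sim$, which lies in $K_\sim$ by the previous step. If the intended notion of sub-coframe requires sharing the bottom of $K$, one should note that $0=1^\#$, and that when $0\in\mo$, as happens for $\of L$ with $\of 0=\OS$, we get $0^\sim=0$. Otherwise the statement should be read with $K_\sim$ having its own bottom $0^\sim$.

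The main obstacle is not the lattice-closure checks, which are immediate, but making sure that joins in $K_\sim$ agree with joins in $K$. For finite joins this is exactly Proposition \ref{prop sim distr}, which rests on $\mo$ being closed under finite joins. Infinite joins are deliberately not required, since a coframe need only preserve finite joins and arbitrary meets. With these closure properties established, the coframe distributivity law transfers verbatim, and the proof is complete.
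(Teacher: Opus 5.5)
Your proof is correct and is exactly the verification the paper leaves to the reader (it gives no proof, only calls the claim easy): the fixed points of the closure operator $\sim$ are closed under arbitrary meets, Proposition \ref{prop sim distr} gives closure under binary joins, and the coframe law then transfers from $K$. Your caveat about the bottom is also right; under the usual reading that ``closed under finite joins'' includes the empty join, $0\in\mo$ and hence $0^\sim=0$, as in all of the paper's examples.
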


 Perhaps more surprisingly, it was proven by Picado, Pultr, Tozzi \cite{PPT} that the system $\SL_\cf (L)$ is always a frame. They did it by using an adjunction between the upsets of $L$ and the meet-sets of $L$.

Here we point out that, by adapting the arguments presented by M. Ernè in \cite{ME} we can present a direct proof that $K_\Box$ (and in particular $\SL_\cf (L)$) is a frame. Since \cite{ME} is not yet published we present the proof here.

We will need the following lemma.
\begin{lemm}[\cite{PP}, p. 106]
    Let $K$ be a coframe. For every complemented $a\in K$ and every family $\{b_i\mid i\in I\}\subseteq K$, we have
    $$a\wedge \tbigvee_{i\in I}b_i=\tbigvee_{i\in I}(a\wedge b_i) .$$
\end{lemm}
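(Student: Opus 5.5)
We must prove that in a coframe $K$, if $a$ is complemented with complement $a'$ (so $a'=a^\#$), then $a\wedge\tbigvee_i b_i=\tbigvee_i(a\wedge b_i)$. The plan is to show that the map $x\mapsto a\wedge x$ is a left adjoint on $K$, since left adjoints preserve all joins.

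The candidate right adjoint is $y\mapsto a'\vee y$. We check that $a\wedge x\le y$ holds if and only if $x\le a'\vee y$.

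For the forward direction, assume $a\wedge x\le y$. Then
\[
x = x\vee(a\wedge a') \le (x\vee a)\wedge(x\vee a').
\]
Meanwhile, by the binary distributive law (which holds in any coframe, since it is a distributive lattice),
\[
a'\vee y \ge a'\vee(a\wedge x) = (a'\vee a)\wedge(a'\vee x) = a'\vee x \ge x.
\]
So $x\le a'\vee y$.

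For the backward direction, assume $x\le a'\vee y$. Then, using binary distributivity again,
\[
a\wedge x\le a\wedge(a'\vee y) = (a\wedge a')\vee(a\wedge y) = a\wedge y \le y.
\]

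Hence $a\wedge(-)\dashv a'\vee(-)$, and $a\wedge(-)$ preserves arbitrary joins, which is the claimed identity. The inequality $\ge$ is trivial in any case.

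The only subtle point is recognizing that finite distributivity suffices and that the coframe law is not really needed here beyond making $K$ a distributive lattice. The key ingredient is the complement $a'$, which provides the right adjoint. There is no real obstacle in this argument.
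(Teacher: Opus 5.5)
Your proof is correct: binary distributivity together with $a\vee a^\#=1$ and $a\wedge a^\#=0$ gives the Galois connection $a\wedge(-)\dashv a^\#\vee(-)$, so $a\wedge(-)$ preserves all joins, including the empty one. The paper does not prove this lemma itself; it only cites \cite{PP}, p.~106. Your self-contained argument is a valid way to establish it, and as you note it works in any complete distributive lattice, not only in coframes. The first display in your forward direction, $x\le (x\vee a)\wedge(x\vee a')$, is never used and can be deleted.
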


\begin{prop}
    For every coframe $K$, $K_\Box$ is a frame whose binary meets are given by the formula
    \begin{equation}\label{152eq}\tag{$\ast$}
        a\stackrel{K_\Box}{\wedge}b=\tbigvee_{i, j} (a_i\wedge b_j),
    \end{equation}
    where $a, b \in K_\Box$, $a=\tbigvee_i a_i$, $b=\tbigvee_j b_j$,  with $a_i, b_j \in \mc$.
\end{prop}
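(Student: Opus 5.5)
The plan has three parts. First describe $K_\Box$ as a complete lattice. Then show that formula \eqref{152eq} computes binary meets in $K_\Box$. Finally derive frame distributivity from that formula.

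Since $\Box$ is an interior operator on the complete lattice $K$, the set $K_\Box$ is exactly the set of joins (computed in $K$) of subsets of $\mc$. Hence $K_\Box$ is closed under arbitrary joins of $K$, and is a complete lattice with the same joins as $K$. Its meet is $a\stackrel{K_\Box}{\wedge}b=(a\wedge b)^\Box$, the largest element of $K_\Box$ below $a\wedge b$.

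To prove \eqref{152eq}, I first need that $\mc$ is closed under finite meets. Each $c\in\mc$ is $c=u^\#$ with $u\in\mo$ complemented, so $c$ is the complement of $u$ and is itself complemented. If $c=u^\#$ and $d=v^\#$ with $u,v\in\mo$, then by property (6) and the fact that complements turn joins into meets, $c\wedge d=(u\vee v)^\#$. Since $u\vee v\in\mo$, we get $c\wedge d\in\mc$. In particular each $a_i\wedge b_j$ lies in $\mc$, so $w:=\tbigvee_{i,j}(a_i\wedge b_j)\in K_\Box$, and clearly $w\le a\wedge b$. This gives $w\le (a\wedge b)^\Box$.

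For the reverse inequality, take $c\in\mc$ with $c\le a\wedge b$. I want to show $c\le w$, and the step I expect to be the main obstacle is that $K$ is only a coframe, so meets need not distribute over joins. The remedy is the quoted lemma from \cite{PP}: meets with a complemented element do distribute over arbitrary joins. Since $c$ is complemented, applying the lemma twice gives
\[
c = c\wedge a\wedge b = \tbigvee_{i}(c\wedge a_i)\wedge b=\tbigvee_i\tbigvee_j (c\wedge a_i\wedge b_j)\le w.
\]
The second application uses that $c\wedge a_i$ is again in $\mc$, hence complemented. Taking the join over all such $c$ yields $(a\wedge b)^\Box\le w$, which establishes \eqref{152eq}.

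Finally, let $a\in K_\Box$ and $\{b^k\}\subseteq K_\Box$, with $a=\tbigvee_i a_i$ and $b^k=\tbigvee_j b^k_j$, all $a_i,b^k_j\in\mc$. Joins in $K_\Box$ are those of $K$, so $\tbigvee_k b^k=\tbigvee_{k,j}b^k_j$ is a representation by elements of $\mc$. Formula \eqref{152eq} is independent of the chosen representation, because its left side is. Applying it to this representation gives
\[
a\wedge\tbigvee_k b^k=\tbigvee_{i,k,j}(a_i\wedge b^k_j)=\tbigvee_k\tbigvee_{i,j}(a_i\wedge b^k_j)=\tbigvee_k(a\wedge b^k),
\]
where all meets are taken in $K_\Box$. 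Hence $K_\Box$ is a frame.
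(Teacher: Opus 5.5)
Your proof is correct and follows essentially the same route as the paper. Both arguments inherit joins from $K$, use closure of $\mc$ under finite meets for the easy inequality, and apply the distributivity lemma for complemented elements twice for the reverse inequality; frame distributivity then follows from $(\ast)$ applied to the concatenated representation $\tbigvee_{k,j}b^k_j$. The only cosmetic difference is that the paper distributes $b_j\in\mc$ over $\tbigvee_i(c\wedge a_i)$, whereas you distribute $c\wedge a_i$ over $\tbigvee_j b_j$; moving $\wedge b$ inside the $i$-join first is harmless since each $c\wedge a_i\le c\le b$.
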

\begin{proof}
It is obvious that $K_\Box$ is complete (with joins computed as in $K$). Moreover, because $\mc$ is closed under meets, we have
$$\tbigvee_{i,j}(a_i \wedge b_j)\leq a\stackrel{K_\Box}{\wedge} b .$$

For the other inequality, consider $c\in \mc$ such that $c\leq a, b$. By the previous lemma,
$$c=c\wedge a= c \wedge \tbigvee_i a_i= \tbigvee_i (c \wedge a_i).$$
Furthermore, we can apply the lemma again to deduce
$$c= \tbigvee_j (b_j \wedge c)= \tbigvee_j (b_j \wedge \tbigvee_i (c \wedge a_i))= \tbigvee_{i,j} (b_j \wedge a_i \wedge c )\leq \tbigvee_{i,j}(a_i \wedge b_j).$$
Since this holds for every $c\in C$ such that $c\le a, b$ and $\mc$ is a join-base of $K_\Box$, it follows that $$a \stackrel{K_\Box}{\wedge} b\leq\tbigvee_{i,j}(a_i \wedge b_j).$$

To see that $K_\Box$ is a coframe, take a family $d_j \in K_\Box$  for $ j \in J^\prime$ and for every $j\in J^\prime$ let 
$$Y_j=\{c \in \mc \mid c\le d_j\} \qtq{and} Y=\tbigcup_jY_j.$$
Clearly $d_j = \tbigvee Y_j$ and therefore $\tbigvee_j d_j=\tbigvee Y$. Moreover, by the formula (\ref{152eq}), we have
$$a \stackrel{K_\Box}{\wedge} \tbigvee_j d_j = (\tbigvee_i a_i) \stackrel{K_\Box}{\wedge}  (\tbigvee_{c\in Y} c~) = \tbigvee_{c\in Y,i\in I} (c\wedge a_i ) \leq \tbigvee_{i,j}(a_i \wedge d_j)\leq\tbigvee_j (a\wedge d_j).\qedhere$$
\end{proof}

\section{Separation conditions generalizing subfitness}\label{sec lifted subfit}
\subsection{$\mo$-Lifted-Subfitness}
As we have seen, the subfitness condition on a locale may be characterized by the behavior of  the coframe of its sublocales. In fact, by Corollary \ref{coro subfit locales} a locale $L$ is subfit if and only if
\begin{equation}\label{lsf}
    S^\sim=L\Leftrightarrow S=L \quad\mbox{for all } S\in \SL(L).
\end{equation}
The following definition extends condition (\ref{lsf}) from the coframe $\SL(L)$ to every coframe $K$ and every $\mo$-closure.

\begin{defin}\label{defin subfit}
A coframe $K$ is said to be $\mo$-lifted-subfit if
\begin{equation*}
     s^\sim=1\Leftrightarrow s=1 \quad \mbox{for every }s \in K.
\end{equation*}
\end{defin}
Whenever the family $\mo$ is clear from the context, we will simply say that $K$ is lifted-subfit, instead of $\mo$-lifted-subfit.

\begin{rema}It is important to keep in mind that a locale $L$ is subfit if and only if $\SL(L)$ is $\of L$-lifted-subfit. 
\[\begin{tikzcd}[ row sep=1cm, column sep=1cm]
\SL(L)&   \of L\textup{-lifted-subfit}\\
 L   \arrow[u, smalltext=lift]& \textup{subfit} \arrow[u, Leftrightarrow ]
\end{tikzcd}\]
\end{rema}
Observe that this approach allows us to unify notions from point-free and classical topology.
\begin{prop}\label{t1 subfit}
    Let $(X,\tau)$ be a topological space. The coframe $\mathcal{P}(X)$ is $\tau$-lifted-subfit if and only if $(X, \tau)$ is $T_1$.   
\end{prop}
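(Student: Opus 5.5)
We work in the setting $K=\mathcal P(X)$ and $\mo=\tau$. Here $\mathcal P(X)$ is a Boolean algebra, hence a coframe. Every open set has a complement, $\tau$ is closed under finite unions, and $X\in\tau$. So the standing assumptions hold, and $\mc$ is the family of closed sets. For $s\subseteq X$ the $\mo$-closure $s^\sim$ is the intersection of all open sets containing $s$, i.e.\ the saturation of $s$. The plan is to unfold Definition \ref{defin subfit} in this concrete setting and compare it with the usual characterization of $T_1$: for all $x\neq y$ there is an open set containing $y$ but not $x$, equivalently every singleton is closed.

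\textbf{($\Leftarrow$)} Assume $(X,\tau)$ is $T_1$ and let $s\subsetneq X$. Choose $x\notin s$. Then $U=X\setminus\{x\}$ is open, since $\{x\}$ is closed, and $s\subseteq U$. Hence $s^\sim\subseteq U\neq X$, so $s^\sim\neq 1$. The implication $s=1\Rightarrow s^\sim=1$ is trivial because $\sim$ is inflationary. So $\mathcal P(X)$ is $\tau$-lifted-subfit.

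\textbf{($\Rightarrow$)} Assume lifted-subfitness, fix $x\in X$ and put $s=X\setminus\{x\}$.
\begin{itemize}
\item If $s=X$, then $X$ is empty and the claim is vacuous; so assume $s\neq X$.
\item Lifted-subfitness then gives $s^\sim\neq X$.
\item Since $s^\sim\supseteq s$ and $s^\sim\neq X$, we get $s^\sim=s$.
\end{itemize}
Thus $s$ is an intersection of open sets $a$ with $X\setminus\{x\}\subseteq a$. Each such $a$ is either $X$ or $X\setminus\{x\}$. If all of them were $X$, the intersection would be $X$; so one of them equals $X\setminus\{x\}$. Hence $X\setminus\{x\}$ is open, i.e.\ $\{x\}$ is closed. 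As $x$ was arbitrary, $(X,\tau)$ is $T_1$.

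The only point needing care is this last step, where we pass from an intersection of opens to a single open set. It works because the only supersets of a cofinite singleton complement are that set and $X$. The rest is direct unfolding of the definitions.
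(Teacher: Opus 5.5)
Your proof is correct and complete. The paper states this proposition without proof, so there is no argument of its own to compare with. Your direct unfolding is the natural verification the paper leaves to the reader: you identify $s^\sim$ with the saturation of $s$, test lifted-subfitness on the cosingletons $X\setminus\{x\}$, and use that their only supersets are themselves and $X$. One cosmetic point: the first bullet in your ($\Rightarrow$) direction is vacuous, since once $x\in X$ is fixed, $X\setminus\{x\}\neq X$ automatically.

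There is also a route through the paper's own machinery. Since $\mathcal P(X)$ is Boolean, Proposition \ref{box id} (via Corollary \ref{coro subfit suplements}) shows that $\tau$-lifted-subfitness is equivalent to $s^\Box=s$ for every $s\subseteq X$. That says every subset is a union of closed sets, which holds iff every singleton is closed. This version relies on a result proved later and is less elementary. In exchange, it gives Proposition \ref{prop fit top} at the same time, which is what the Remark after Proposition \ref{box id} points to. Your argument has the advantage of being self-contained.
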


All the results we will prove about lifted-subfit coframes are consequences of the following simple observation:
 \begin{lemm}\label{lemasubfit}
     A coframe $K$ is lifted-subfit if and only if for every $s, t \in K$
     $$s^\sim\vee t=1 \Leftrightarrow s \vee t=1.$$
 \end{lemm}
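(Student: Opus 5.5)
Both directions are short; one is almost immediate from the definition. For the direction ``$\Leftarrow$'', I take $t=0$. Then the hypothesis says $s^\sim\vee 0=1$ iff $s\vee 0=1$, that is, $s^\sim=1\Leftrightarrow s=1$. This is exactly Definition~\ref{defin subfit}.

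For ``$\Rightarrow$'', assume $K$ is lifted-subfit and fix $s,t\in K$. Since $\sim$ is a closure operator, $s\le s^\sim$. Hence $s\vee t=1$ implies $s^\sim\vee t=1$.

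The converse needs more care, and it is the only real step. Suppose $s^\sim\vee t=1$. By property (1) of supplements, this means $t^\#\le s^\sim$. I would like to move the closure from $s$ onto something built from $s$ and $t$, and then apply lifted-subfitness. The natural candidate is $s\vee t$. By Proposition~\ref{prop sim distr},
$$(s\vee t)^\sim=s^\sim\vee t^\sim\ge s^\sim\vee t=1,$$
so $(s\vee t)^\sim=1$. Lifted-subfitness then gives $s\vee t=1$, which is what we want.

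So the whole proof reduces to the finite-join preservation of $\sim$ (Proposition~\ref{prop sim distr}) together with extensivity $t\le t^\sim$. I do not expect any genuine obstacle: the one point to check is that Proposition~\ref{prop sim distr} really holds in the present generality. It does, because $\mo$ is assumed closed under finite joins, and that closure property is exactly what the proof of Proposition~\ref{prop sim distr} uses. The supplement reformulation is not needed in the end.
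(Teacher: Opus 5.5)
Your proof is correct and takes essentially the paper's route: the paper also takes $t=0$ for the converse, and for the forward direction uses Proposition~\ref{prop sim distr} to identify $(s^\sim\vee t)^\sim$ with $(s\vee t)^\sim$ before applying lifted-subfitness. Your version only needs the inequality $(s\vee t)^\sim\ge s^\sim\vee t$ and one application of lifted-subfitness, not the paper's chain of equivalences; the supplement reformulation you mention is an unused detour.
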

\begin{proof}
Suppose that $K$ is lifted-subfit. Then, by Proposition \ref{prop sim distr}
 $$s^\sim\vee t=1 \Leftrightarrow( s^\sim\vee t)^\sim=1 \Leftrightarrow (s \vee t)^\sim=1\Leftrightarrow s \vee t=1.$$
 To prove the converse, just take $t=0$.
\end{proof}

We can now obtain an apparently stronger characterization of lifted-subfit coframes.
\begin{coro}\label{coro subfit in the opens}
    A coframe $K$ is lifted-subfit if and only if, for all $s\in K$ and $a\in \mo$
    $$s^\sim =a\Leftrightarrow s=a .$$
\end{coro}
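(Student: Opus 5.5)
The plan is to deduce both directions from Lemma \ref{lemasubfit}, using that every $a\in\mo$ is complemented. The converse direction is immediate. Take $a=1$, which lies in $\mo$ by the standing assumptions. The condition then reads $s^\sim=1\Leftrightarrow s=1$, which is exactly Definition \ref{defin subfit}.

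For the forward direction, assume $K$ is lifted-subfit and fix $s\in K$, $a\in\mo$. If $s=a$, then $s^\sim=a^\sim=a$, since $a\in\mo$ and $\sim$ is a closure operator whose fixed points include $\mo$. Conversely, suppose $s^\sim=a$. Then $s\le s^\sim=a$, so only $a\le s$ remains to be shown.

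To get $a\le s$, I will use the complement $a^\#$ of $a$, which satisfies $a\wedge a^\#=0$ and $a\vee a^\#=1$. Since $s^\sim\vee a^\#=a\vee a^\#=1$, Lemma \ref{lemasubfit} with $t=a^\#$ gives $s\vee a^\#=1$. Meeting with $a$ and using distributivity, which holds because $a$ is complemented (the lemma from \cite{PP} quoted above, or simply coframe distributivity for the complemented element $a^\#$), we obtain
\[
a=a\wedge(s\vee a^\#)=(a\wedge s)\vee(a\wedge a^\#)=a\wedge s,
\]
so $a\le s$ and hence $s=a$.

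The only potentially delicate point is this distributivity step, since $K$ is merely a coframe and binary meet does not distribute over binary join in general. The cited lemma, however, states exactly that meeting with a complemented element distributes over joins, and $a$ is complemented by hypothesis. So I expect no real obstacle: the argument is a direct computation combining Lemma \ref{lemasubfit} with complementation.
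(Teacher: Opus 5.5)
Your proof is correct and follows the paper's argument: the converse by taking $a=1$, and the forward direction by passing from $s^\sim\vee a^\#=1$ to $s\vee a^\#=1$ via Lemma \ref{lemasubfit} and then concluding $a\le s$ (the paper reads this off the supplement identities, $a=a^{\#\#}\le s$, where you use distributivity instead). Your worry about the distributivity step is unfounded: every coframe is a distributive lattice, because finite distributivity of joins over meets implies the dual law, so the cited lemma from \cite{PP} is only needed for infinite joins.
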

\begin{proof}
Suppose that $K$ is lifted-subfit. It suffices to check that,  for all $s\in K$ and $a\in \mo$
$s^\sim =a\Rightarrow s\geq a .$
To do so, use Lemma \ref{lemasubfit} and the fact that $a$ is complemented:
$$s^\sim =a\Rightarrow s^\sim \vee  a^\#=1 \Rightarrow s \vee  a^\# =1 \Rightarrow  s\geq a .$$

For the converse, take $a=1$.
\end{proof}

We now characterize lifted-subfitness by some equalities on the operators $\sim, \Box$ and $\#$.
\begin{coro}\label{coro subfit equalities}
    The following conditions on a coframe $K$ are equivalent:
    \begin{enumerate}
        \item $K$ is lifted-subfit.
        \item For every $s\in K$, $s^{\sim \#}=s^\#$ . 
        \item For every $s\in K$, $s^{\sim \Box}=s^\Box$.
    \end{enumerate}
\end{coro}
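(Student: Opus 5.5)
We prove (1)$\Rightarrow$(2)$\Rightarrow$(1) and (2)$\Leftrightarrow$(3), using Lemma \ref{lemasubfit}, the supplement property $a^\#\le b \Leftrightarrow a\vee b=1$, and Proposition \ref{boxsim}.

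For (1)$\Rightarrow$(2): since $s\le s^\sim$, property (2) of supplements gives $s^{\sim\#}\le s^\#$. For the reverse inequality it suffices, by property (1) of supplements, to check $s^\sim\vee s^\#=1$. This holds because $s\vee s^\#=1$, and Lemma \ref{lemasubfit} applied with $t=s^\#$ then yields $s^\sim\vee s^\#=1$. For (2)$\Rightarrow$(1): if $s^\sim=1$ then $s^\#=s^{\sim\#}=1^\#=0$. Since $s^\#\le s^\#$ is equivalent to $s\vee s^\#=1$, we get $s=s\vee 0=1$. The converse implication $s=1\Rightarrow s^\sim=1$ is trivial.

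For (2)$\Leftrightarrow$(3): by Proposition \ref{boxsim}, $s^\Box=s^{\#\sim\#}$. Assuming (2) and applying it to $s^\#$ gives $s^\Box=s^{\#\#}$ for every $s$. Hence $s^{\sim\Box}=s^{\sim\#\#}=(s^{\sim\#})^\#=s^{\#\#}=s^\Box$.

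The converse (3)$\Rightarrow$(2) is the main obstacle, because $s^{\sim\#}$ must be recovered from $\Box$-data. The plan is to first show that (3) implies lifted-subfitness and then use (1)$\Rightarrow$(2).

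Suppose $s^\sim=1$. Then $s^\Box=s^{\sim\Box}=1^\Box=1$, where $1^\Box=1$ because $1=0^\#\in\mc$ (since $0=1^\#$ is the complement of $1\in\mo$). As $s^\Box\le s$, this forces $s=1$, which is (1). The equivalence therefore closes without any direct computation for (3)$\Rightarrow$(2).
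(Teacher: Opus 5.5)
\textbf{Gap in (1)$\Rightarrow$(2).} You derive the wrong inequality. By supplement property (1), $a^\#\le b\Leftrightarrow a\vee b=1$, the equation $s^\sim\vee s^\#=1$ is equivalent to $s^{\sim\#}\le s^\#$. That is the inequality you already obtained from $s\le s^\sim$ by antitonicity. It is not the reverse inequality $s^\#\le s^{\sim\#}$ that you need.

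Notice also that $s^\sim\vee s^\#=1$ follows from $s\vee s^\#=1$ and $s\le s^\sim$ alone. So you used only the trivial direction of Lemma \ref{lemasubfit} and never the hypothesis that $K$ is lifted-subfit. No argument of that kind can work. For $K=\{0<a<1\}$ and $\mo=\{0,1\}$ we have $a^{\sim\#}=1^\#=0$ but $a^\#=1$, so (2) fails there.

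The missing step is as follows. The inequality $s^\#\le s^{\sim\#}$ is equivalent to $s\vee s^{\sim\#}=1$. Since $s^\sim\vee s^{\sim\#}=1$ always holds, the nontrivial direction of Lemma \ref{lemasubfit} with $t=s^{\sim\#}$ gives $s\vee s^{\sim\#}=1$. The paper phrases this equivalently: the lemma says the sets $\{t\mid s^\sim\vee t=1\}$ and $\{t\mid s\vee t=1\}$ coincide, so their meets $s^{\sim\#}$ and $s^\#$ coincide.

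Your (3)$\Rightarrow$(2) is routed through (1)$\Rightarrow$(2), so it inherits this gap until you repair it.

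\textbf{The other steps.} Your (2)$\Rightarrow$(1) is correct. Your (2)$\Rightarrow$(3), which goes through $s^\Box=s^{\#\#}$, is a valid variant of the paper's computation $s^{\sim\Box}=(s^{\sim\#})^{\sim\#}=s^{\#\sim\#}=s^\Box$. Your (3)$\Rightarrow$(1) is the paper's argument.

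One small correction in (3)$\Rightarrow$(1): $1\in\mc$ holds because $0\in\mo$ (the empty join), so $1=0^\#\in\mc$. Your parenthetical ``$0=1^\#$ with $1\in\mo$'' only shows $0\in\mc$. This point matters: with $\mo=\{1\}$, condition (3) holds trivially while (1) fails.
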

\begin{proof}
    (1)$\Rightarrow$(2): Applying Lemma \ref{lemasubfit} we get
    $$s^{\sim \#}=\tbigwedge\{t\in K \mid s^\sim \vee t=1\}=\tbigwedge\{t\in K \mid s \vee t=1\}= s^\#.$$

    $(2)\Rightarrow(3)$: Using Proposition \ref{boxsim} we obtain,
    $s^{\sim\Box}=(s^{\sim\#})^{\sim\#}= s^{\#\sim\#}=s^\Box.$
    
      $(3)\Rightarrow(1)$: Observe that
    $$s^\sim =1 \Leftrightarrow s^{\sim \Box}=1\Leftrightarrow s^\Box =1\Leftrightarrow s=1.\qedhere $$
\end{proof}

\begin{coro}\label{coro subfit suplements}
    Consider the following conditions on a coframe $K$:
    \begin{enumerate}
        \item $K$ is lifted-subfit.
        \item For every $s\in K$, $s^\Box=s^{\#\#}$.
        \item $K_\Box=K_\#$ (i.e. $K_\Box$ is the Booleanization of $K$).
        \item For every $a\in \mo$, $a^\Box= a$.
    \end{enumerate}
    Then $(1)\Rightarrow(2)\Leftrightarrow(3)\Rightarrow (4)$.
\end{coro}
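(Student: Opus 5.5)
Three implications are needed: (1)$\Rightarrow$(2), (2)$\Leftrightarrow$(3), and (2)$\Rightarrow$(4). All of them come from combining Corollary \ref{coro subfit equalities}, Proposition \ref{boxsim} and Corollary \ref{sup}.

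\emph{(1)$\Rightarrow$(2).} Assume $K$ is lifted-subfit. By Corollary \ref{coro subfit equalities}(2), $t^{\sim\#}=t^\#$ holds for every $t\in K$; we apply it to $t=s^\#$, getting $s^{\#\sim\#}=s^{\#\#}$. Proposition \ref{boxsim} identifies the left-hand side with $s^\Box$, so $s^\Box=s^{\#\#}$.

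\emph{(2)$\Rightarrow$(3).} We always have $K_\#=\{s^{\#\#}\mid s\in K\}$. Under (2), $K_\Box=\{s^\Box\mid s\in K\}=\{s^{\#\#}\mid s\in K\}=K_\#$.

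\emph{(3)$\Rightarrow$(2).} This direction needs one observation: on any subset equal to $K_\Box$ the interior is the identity, and on $K_\#$ the double supplement is the identity, since $t^{\#\#\#}=t^\#$. So, given $s$, the element $s^{\#\#}$ lies in $K_\#=K_\Box$, and therefore $s^{\#\#}=(s^{\#\#})^\Box$. Corollary \ref{sup}(2) gives $s^{\#\#\Box}=s^\Box$, hence $s^{\#\#}=s^\Box$. This is the one step that needs a small trick rather than a direct substitution.

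\emph{(2)$\Rightarrow$(4).} Take $a\in\mo$. Since $a$ is complemented, property (8) of supplements gives $a^{\#\#}=a$. Then (2) yields $a^\Box=a^{\#\#}=a$.
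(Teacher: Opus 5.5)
Your proposal is correct and matches the paper's proof step for step. You get (1)$\Rightarrow$(2) by applying Corollary \ref{coro subfit equalities}(2) to $s^\#$ and then Proposition \ref{boxsim}; you get (3)$\Rightarrow$(2) from $s^{\#\#}\in K_\#=K_\Box$ together with Corollary \ref{sup}(2); and you get (2)$\Rightarrow$(4) from $a^{\#\#}=a$ for complemented $a$.
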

\begin{proof}
    $(1)\Rightarrow(2)$: By Proposition \ref{boxsim} and Corollary \ref{coro subfit equalities} (2) it follows that, for every $s\in K$, 
    $s^\Box=(s^\#)^{\sim \#}=s^{\#\#}.$
    
    $(2)\Rightarrow(3)$: Just observe that
$K_\#=\{s^{\#\#}\mid s\in K\}=\{s^{\Box}\mid s\in K\}=K_\Box.$

    $(3)\Rightarrow(2)$: Let $s\in K$. Note that $s^{\#\#}\in K_\#=K_\Box$, hence $s^{\#\#}=s^{\#\#\Box}$. Now, applying Corollary \ref{sup} we conclude that
    $s^{\#\#}=s^{\#\#\Box}=s^\Box.$
    
    $(2)\Rightarrow(4)$: Since every $a\in \mo$ has a complement it follows that
    $a^\Box= a^{\#\#}=a.$
\end{proof}

\begin{remas}\label{remark ex}
   (I) In general, condition (2) does not imply condition (1). For example take
    $K=\{0< a< 1\},$
    and consider $\mo=\{0, 1\}.$

   (II) We also point out that condition (4) does not imply condition (2). For example, take $B$ to be the following Boolean algebra
    \begin{center}
	\begin{tikzpicture}[scale=.5]
		\draw[line width=0.35mm] (2.5,0) -- (0,2.25);
		\draw[line width=0.35mm] (2.5,0) -- (5,2.25);
		\draw[line width=0.35mm] (2.5,4.5) -- (0,2.25);
		\draw[line width=0.35mm] (2.5,4.5) -- (5,2.25);
		\node at (1.85,0) {\small$0$};
		\node at (-.75,2.25) {\small$x$};
		\node at (5.75,2.25) {\small$y$};
		\node at (3.5,4.5) {\small$1$};
		\draw[fill=orange!30!white,line width=0.30mm] (2.5,0) circle (5.5pt);
		\draw[fill=orange!30!white,line width=0.30mm] (0,2.25) circle (5.5pt);
		\draw[fill=orange!30!white,line width=0.30mm] (5,2.25) circle (5.5pt);
		\draw[fill=orange!30!white,line width=0.30mm] (2.5,4.5) circle (5.5pt);
	\end{tikzpicture}
\end{center}
and take $\mo=\{0, 1\}$. 
It is clear that condition (4) holds, nevertheless $x^\Box=0$ but $x^{\#\#}=x$.
\end{remas}

Applying the previous result when $L$ is a locale, $K=\SL(L)$ and $\mo=\of L$ and using Corollary \ref{coro subfit locales} it immediately follows that:

\begin{coro}
    The following statements about a locale $L$ are equivalent:
    \begin{enumerate}
        \item $L$ is subfit.
        \item For every $S\in \SL(L)$, $S^\Box=S^{\#\#}$.
        \item $\SL_\cf(L)$  is the Booleanization of $\SL(L)$.
        \item For every $a\in L$, $\of a^\Box=\of a$.
    \end{enumerate}
\end{coro}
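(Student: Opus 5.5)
This follows from Corollary \ref{coro subfit suplements}, applied with $K=\SL(L)$ and $\mo=\of L$, together with Corollary \ref{coro subfit locales}. Before starting, I check that the setting of Section \ref{sec oclo cint} applies. Each $\of a$ is complemented in $\SL(L)$, with complement $\cf a$. The family $\of L$ is closed under finite joins, since $\of a\vee\of b=\of(a\vee b)$, and it contains $L=\of 1$. Finally $\mc=\{\of a^\#\}=\cf L$. So, by the Remark following the definitions, the $\mc$-interior is the $\cf$-interior of Definition \ref{box and sim in loc}, and $K_\Box=\SL_\cf(L)$.

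Next I identify the conditions. By the Remark after Definition \ref{defin subfit}, $L$ is subfit if and only if $\SL(L)$ is $\of L$-lifted-subfit. Hence condition (1) of the present statement is condition (1) of Corollary \ref{coro subfit suplements}, and likewise (2) and (3) correspond to (2) and (3) there; here $K_\#$ is the Booleanization of $\SL(L)$. Condition (4) reads $\of a^\Box=\of a$ for every $a\in L$, which is exactly (4) of Corollary \ref{coro subfit suplements}, because $\mo=\of L$.

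Corollary \ref{coro subfit suplements} then gives $(1)\Rightarrow(2)\Leftrightarrow(3)\Rightarrow(4)$. The only direction not supplied by the general theory is $(4)\Rightarrow(1)$, and the general theory cannot supply it, by Remark \ref{remark ex}(II). It is exactly Corollary \ref{coro subfit locales}, $(2)\Rightarrow(1)$, which in turn rests on the classical characterization that $L$ is subfit if and only if every open sublocale is a join of closed ones. That characterization is the genuinely locale-specific input, and this cited step is where the real content sits. Everything else is bookkeeping that translates the abstract operators into the sublocale ones. Combining the two corollaries closes the cycle of implications.
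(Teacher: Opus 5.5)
Your proposal is correct and matches the paper's argument. The paper likewise specializes Corollary~\ref{coro subfit suplements} to $K=\SL(L)$, $\mo=\of L$ to get $(1)\Rightarrow(2)\Leftrightarrow(3)\Rightarrow(4)$, and then closes the cycle with $(4)\Rightarrow(1)$ from Corollary~\ref{coro subfit locales}.
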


\subsection{Quasi lifted-subfitness: A weaker version of lifted-subfitness}

We now look at an alternative characterization of subfitness of a locale $L$ in terms of the properties of its sublocales.

As we have seen in Corollary \ref{coro subfit locales}, a locale $L$ is subfit if and only if
\begin{equation}\label{q-l-sf}
    \of a^\Box=\of a \quad\mbox{ for all } a \in L.
\end{equation}
The following definition extends condition (\ref{q-l-sf}) from the coframe $\SL(L)$ to every coframe $K$ and every $\mc$-interior.
\begin{defin}
    We say that a coframe $K$ is quasi $\mo$-lifted-subfit if, 
    $a^\Box=a \quad\mbox{for all } a\in \mo.$
\end{defin}
    Whenever $\mo$ is clear from the context, we will simply say that $K$ is quasi lifted-subfit, instead of quasi $\mo$-lifted-subfit.

As an immediate consequence of Corollary \ref{coro subfit suplements} we obtain:
\begin{coro}\label{coro ls implies qlf}
    A lifted-subfit coframe is quasi lifted-subfit.
\end{coro}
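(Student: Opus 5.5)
This is the implication (1)$\Rightarrow$(4) of Corollary \ref{coro subfit suplements}, so the plan is simply to chain that result with the definition of quasi lifted-subfitness. Let $K$ be lifted-subfit and fix $a\in\mo$.

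First, Corollary \ref{coro subfit suplements} (1)$\Rightarrow$(2) gives $s^\Box=s^{\#\#}$ for every $s\in K$; in particular $a^\Box=a^{\#\#}$. That implication rests on Proposition \ref{boxsim}, $s^\Box=s^{\#\sim\#}$, together with Corollary \ref{coro subfit equalities}(2), $t^{\sim\#}=t^\#$, applied to $t=s^\#$.

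Next, by the standing assumptions of the setting every $a\in\mo$ is complemented in $K$. Property (8) of supplements for complemented elements then gives $a^{\#\#}=a$. Combining the two steps yields $a^\Box=a$, which is exactly the definition of quasi lifted-subfitness.

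I expect no real obstacle. The only point to check is that we may use that elements of $\mo$ are complemented, and this holds by hypothesis.
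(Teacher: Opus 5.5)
Your proposal is correct and is essentially the paper's argument. The paper obtains this as an immediate consequence of Corollary \ref{coro subfit suplements}, via the chain (1)$\Rightarrow$(2)$\Rightarrow$(4): $s^\Box=s^{\#\sim\#}=s^{\#\#}$, and then $a^\Box=a^{\#\#}=a$ for $a\in\mo$ because $a$ is complemented.
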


\begin{rema}\label{rema ex quasi subfit}
Contrary to what happens in the coframe of sublocales (with $\mo=\of L$), a general coframe may be quasi lifted-subfit without being lifted-subfit. For an example, take $B$ and $\mo$ as defined in Remark \ref{remark ex}.

In section \ref{sec 0dim} we will see a sufficient condition to guarantee that a quasi lifted-subfit coframe is lifted-subfit.
\end{rema}  

It is an easy exercise to prove that:
\begin{prop}
    A coframe $K$ is quasi lifted-subfit if and only if
    $$a\leq s \Leftrightarrow a\leq s^\Box, \quad \mbox{for all } s\in K \text{ and } a\in \mo.$$
\end{prop}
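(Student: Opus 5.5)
Both directions follow directly from the definition of $\Box$ and from the fact that $\Box$ is an interior operator, so $s^\Box\le s$ and $\Box$ is monotone and idempotent.

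($\Rightarrow$) Assume $K$ is quasi lifted-subfit, and let $s\in K$ and $a\in\mo$. If $a\le s^\Box$, then $a\le s$ because $s^\Box\le s$; this implication needs no hypothesis. Conversely, if $a\le s$, monotonicity of $\Box$ together with the hypothesis $a^\Box=a$ gives
$$a=a^\Box\le s^\Box.$$
Hence $a\le s$ if and only if $a\le s^\Box$.

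($\Leftarrow$) Assume the equivalence holds for all $s\in K$ and $a\in\mo$. Given $a\in\mo$, take $s=a$. Since $a\le a$, the equivalence yields $a\le a^\Box$. The reverse inequality $a^\Box\le a$ holds for any interior operator, so $a^\Box=a$. This is exactly quasi lifted-subfitness.

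None of these steps is a real obstacle. The only care needed is to note that monotonicity of $\Box$ is immediate from its definition as a join over $\{c\in\mc\mid c\le s\}$: if $s\le t$, every $c\le s$ also satisfies $c\le t$, so $s^\Box\le t^\Box$.
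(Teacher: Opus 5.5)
Your proof is correct: the forward direction combines $s^\Box\le s$ with monotonicity of $\Box$ and $a^\Box=a$, and the converse follows by taking $s=a$. The paper gives no proof and calls this an easy exercise, so your argument is exactly the direct verification that is intended.
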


\subsection{A even weaker version of lifted-subfitness}

As we have seen, given a locale $L$, $\SL (L)$ is $\of L$-lifted-subfit if and only if $L$ is subfit. Since $L$ is isomorphic to $\of L$ we immediately obtain that
 $\SL (L)$ is $\of L$-lifted-subfit if and only if $\of L$ is subfit.
 
In this section we discuss how the $\mo$-lifted-subfitness of $K$ is related to the subfitness of $\mo$ (viewed as a subposet of $K$). To do so, start by observing that the definition of a subfit locale only depends on the existence of binary joins and a top. Hence, we can directly extend the definition of subfit locale to every join-semilattice with a top element.  

\begin{defin}
        A join-semilattice with top $A$ is said to be \emph{subfit} when
    \begin{center}
        if $a\not \leq b$, then there exists $c\in A$ such that $a\vee c=1\not =b\vee c.$
    \end{center}
\end{defin}

Since $\mo$ contains $1$ and is closed under finite joins in $K$, it is a join-semilattice with top (with respect to the order inherited from $K$). Hence, we can ask the question
    ``When is $\mo$ a subfit join-semilattice with top?"

It is easy to show that
\begin{prop}\label{prop subfit in mo}
The following statements are equivalent
    \begin{enumerate}
        \item For all $a, b\in \mo$
        $$a^\Box\le b^\Box\Rightarrow a\le b.$$
        \item Let $a, b \in \mo$. If for all $c\in \mc$
        $$c\le a \Rightarrow c\le b,$$
        then $a\le b$.     
        \item Let $a, b \in \mo$. If for all $c\in \mc$
        $$c^\#\vee a=1 \Rightarrow c^\#\vee b=1,$$
        then $a\le b$.
        \item $\mo$ is subfit.
    \end{enumerate}
\end{prop}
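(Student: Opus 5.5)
The plan is to prove the cycle of equivalences (1)$\Leftrightarrow$(2)$\Leftrightarrow$(3)$\Leftrightarrow$(4). Each step only unwinds definitions, using that elements of $\mo$ are complemented and that $\mc=\{a^\#\mid a\in\mo\}$. Throughout we use that for $c\in\mc$, say $c=d^\#$ with $d\in\mo$, the element $c$ is complemented with complement $d$. By property (8) of supplements, $c^\#=d^{\#\#}=d\in\mo$, so $c\mapsto c^\#$ is a bijection $\mc\to\mo$ whose inverse is $a\mapsto a^\#$.

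For (1)$\Leftrightarrow$(2), note that for $c\in\mc$ we have $c\le a$ iff $c\le a^\Box$, since $a^\Box$ is the join of the elements of $\mc$ below $a$ and $a^\Box\le a$. Hence the hypothesis of (2), namely ``$c\le a\Rightarrow c\le b$ for all $c\in\mc$'', is equivalent to $a^\Box\le b^\Box$. Indeed, if every $c\le a$ in $\mc$ lies below $b$, then it lies below $b^\Box$, so $a^\Box\le b^\Box$. Conversely, $c\le a$ gives $c\le a^\Box\le b^\Box\le b$. Thus (1) and (2) state the same implication.

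For (2)$\Leftrightarrow$(3), we show that for $c\in\mc$ and $a\in\mo$ we have $c\le a$ iff $c^\#\vee a=1$. By property (1) of supplements, $c^\#\vee a=1$ iff $a^\#\le c^\#$ (take the roles $a^\#\le b\Leftrightarrow a\vee b=1$ with the elements $c^\#$ and $a$). Since both $a$ and $c$ are complemented, property (7) gives $a^\#\le c^\#$ iff $c\le a^{\#\#}=a$. Alternatively, one can argue directly: if $c\le a$ then $1=c^\#\vee c\le c^\#\vee a$; and if $c^\#\vee a=1$ then $c=c\wedge(c^\#\vee a)=c\wedge a$, using the distributivity lemma for the complemented $c$ and $c\wedge c^\#=0$. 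So the hypotheses of (2) and (3) coincide.

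For (3)$\Leftrightarrow$(4), reindex via the bijection $\mc\to\mo$, $c\mapsto c^\#$. The hypothesis of (3) becomes ``for all $e\in\mo$, $e\vee a=1\Rightarrow e\vee b=1$''. Thus (3) says that whenever $a\not\le b$ there is $e\in\mo$ with $e\vee a=1\ne e\vee b$, and this is exactly the definition of subfitness of the join-semilattice $\mo$ (with joins and top as in $K$, since $\mo$ is closed under finite joins and contains $1$). The only point needing care is that this bijection really lands in and exhausts $\mo$, which relies on $a^{\#\#}=a$ for complemented $a$; no step appears to be a genuine obstacle.
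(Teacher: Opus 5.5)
Your proof is correct: (1) and (2) are the same implication once you see that the hypothesis of (2) is equivalent to $a^\Box\le b^\Box$. The hypotheses of (2) and (3) coincide because $c\le a\iff c^\#\vee a=1$ for complemented $c$, and (3) becomes the definition of subfitness of $\mo$ after reindexing along the bijection $\mc\to\mo$, $c\mapsto c^\#$. The paper gives no argument here (it only says ``It is easy to show''), and your definition-unwinding is exactly the routine verification it leaves to the reader.
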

\begin{coro}\label{coro qsf implies mosf}
    Let $K$ be a coframe. If $K$ is quasi lifted-subfit, then $\mo$ is subfit.
\end{coro}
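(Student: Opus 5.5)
The plan is to verify condition (1) of Proposition \ref{prop subfit in mo} and then use the equivalence (1)$\Leftrightarrow$(4) stated there to conclude that $\mo$ is subfit. Since the statement is an immediate consequence of the definitions, no further machinery should be needed.

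First, assume $K$ is quasi lifted-subfit. By definition this means $a^\Box=a$ for every $a\in\mo$.

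Next, take $a,b\in\mo$ with $a^\Box\le b^\Box$. Applying quasi lifted-subfitness to both $a$ and $b$ rewrites this inequality as $a\le b$. This is exactly the implication required in condition (1) of Proposition \ref{prop subfit in mo}.

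Finally, the equivalence (1)$\Leftrightarrow$(4) of Proposition \ref{prop subfit in mo} gives that $\mo$ is subfit. If one wants to avoid relying on that equivalence, condition (2) can be checked directly instead. Suppose every $c\in\mc$ with $c\le a$ also satisfies $c\le b$. Then
$$a=a^\Box=\tbigvee\{c\in\mc\mid c\le a\}\le b.$$
This establishes (2), from which (4) follows in the same way. There is no real obstacle here: the only substantive content lies in the equivalences of Proposition \ref{prop subfit in mo}, which we take as given.
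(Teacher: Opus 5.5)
Your proposal is correct and matches the paper's approach. The paper states this corollary without proof, directly after Proposition~\ref{prop subfit in mo}. The intended argument is the one you give: $a^\Box=a$ for all $a\in\mo$ makes condition (1) of that proposition, or condition (2) in your alternative, hold trivially, and the equivalence with (4) then yields subfitness of $\mo$.
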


In Section \ref{sec 0dim} we will see a sufficient condition to guarantee that a coframe is quasi $\mo$-lifted-subfit if and only if $\mo$ is subfit.

\section{Consequences of lifted-subfitness}\label{sec consequencias lifte subfit}

\subsection{Supplements in $\SL(L)$}

As an application of our results, we now present a formula for the computation of supplements in $\SL(L)$ whenever $L$ is a subfit frame. To do so, we have to recall the definition of a nucleus.

\begin{defin}
 Let $L$ be a locale. A \emph{nucleus} on $L$ is a closure operator $\nu$ on $L$ satisfying the property
$$\nu(a\wedge b)=\nu (a)\wedge\nu (b)\quad\mbox{ for all } a,b\in L.$$
\end{defin}

Nucleus and sublocales are related by the following proposition.
\begin{theo}[\cite{PP}]\label{sublocales nucleus}
    Let $L$ be a locale and $\nu \colon L\to L$ a nucleus. Then $\nu [L]$, the set-theoretic image of $L$ along $\nu$, is a sublocale. Vice versa, given a sublocale $S\subseteq L$, the map
        \begin{equation*} 
		\xymatrix@C=25pt@R=10pt{
			\nu_S\colon L  \ar[r]^{}  & L \\
			\hspace*{12mm}x\ar@{|->}[r]_{} & \tbigwedge \{s\in S \mid x\le s\}}
	\end{equation*}
    is a nucleus. Moreover $\nu_S[L]=S$ and $\nu_{\nu[L]}=\nu$.
\end{theo}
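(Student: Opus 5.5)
The plan is to prove the three assertions in turn, working only from the definitions of sublocale and nucleus.

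\emph{Step 1: $\nu[L]$ is a sublocale.} First record the standard fact $\nu(x)=x$ iff $x\in\nu[L]$, which holds since $\nu$ is idempotent. For (Sl1), take $M\subseteq\nu[L]$. Monotonicity gives $\nu(\tbigwedge M)\le\nu(m)=m$ for each $m\in M$. Hence $\nu(\tbigwedge M)\le\tbigwedge M$, and extensivity gives equality. For (Sl2), take $s=\nu(s)$ and $x\in L$. Since $x\wedge(x\to s)\le s$, preservation of binary meets yields
\[
\nu(x)\wedge\nu(x\to s)=\nu(x\wedge(x\to s))\le\nu(s)=s.
\]
Therefore $x\wedge\nu(x\to s)\le s$, that is, $\nu(x\to s)\le x\to s$. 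The reverse inequality is extensivity, so $x\to s\in\nu[L]$.

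\emph{Step 2: $\nu_S$ is a nucleus.} By (Sl1), $\nu_S(x)\in S$, and $\nu_S(x)$ is the least element of $S$ above $x$. Extensivity and monotonicity are then immediate. Idempotency follows because $\nu_S(s)=s$ for $s\in S$. This also gives $\nu_S[L]=S$.

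The only genuine point, and the step I expect to be the main obstacle, is $\nu_S(a)\wedge\nu_S(b)\le\nu_S(a\wedge b)$. I will use (Sl2) twice with $s=\nu_S(a\wedge b)$.
\begin{itemize}
\item Since $a\wedge b\le s$, we have $a\le b\to s\in S$, so $\nu_S(a)\le b\to s$. Hence $b\le\nu_S(a)\to s$.
\item Since $\nu_S(a)\to s\in S$, it follows that $\nu_S(b)\le\nu_S(a)\to s$. Hence $\nu_S(a)\wedge\nu_S(b)\le s$.
\end{itemize}
The reverse inequality comes from monotonicity.

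\emph{Step 3: $\nu_{\nu[L]}=\nu$.} Here $\nu_{\nu[L]}(x)$ is the least fixed point of $\nu$ above $x$. Now $\nu(x)$ is a fixed point with $x\le\nu(x)$. Moreover, any fixed point $s\ge x$ satisfies $\nu(x)\le\nu(s)=s$. So the least fixed point above $x$ is $\nu(x)$, which gives the equality.
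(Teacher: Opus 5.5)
Your proof is correct and complete; the paper gives no proof of its own here and simply cites \cite{PP}. Your argument is the standard one from that source: the fixed points of $\nu$ are closed under meets and under $x\to(-)$ via $\nu(x)\wedge\nu(x\to s)\le s$, and $\nu_S$ preserves binary meets by applying (Sl2) twice with $s=\nu_S(a\wedge b)$.
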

We will need the following property of nuclei.

\begin{lemm}[\cite{PP}, pp. 74]
    Let $L$ be a locale and $S$ a sublocale of $L$. Then
    $$S\sue \of a \Leftrightarrow \nu_S(a)=1.$$
\end{lemm}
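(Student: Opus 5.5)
The plan is to argue directly from the definitions, using the explicit description of $\of a$ and the fact that $\nu_S$ is a nucleus fixing exactly the elements of $S$ (Theorem \ref{sublocales nucleus}). Two elementary facts will be used throughout. First, $\nu_S(a)=\tbigwedge\{s\in S\mid a\le s\}$, and this equals $1$ exactly when the only element of $S$ above $a$ is $1$ (because $1\in S$, being the empty meet). Second, since every element of $\of a$ has the form $a\to y$, and $a\to(a\to y)=a\to y$, an element $x$ lies in $\of a$ if and only if $x=a\to x$.

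For ($\Rightarrow$), I will assume $S\sue\of a$ and take any $s\in S$ with $a\le s$. Writing $s=a\to y$, the inequality $a\le a\to y$ gives $a=a\wedge a\le y$, so $s=a\to y=1$. Hence the only element of $S$ above $a$ is $1$, and therefore $\nu_S(a)=1$.

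For ($\Leftarrow$), I will assume $\nu_S(a)=1$ and fix $s\in S$; by the second fact it suffices to show $a\to s\le s$, since $s\le a\to s$ always holds. This is the only step needing an idea, and the key is the meet-preservation of the nucleus. Monotonicity and $\nu_S(s)=s$ give
$$\nu_S(a)\wedge\nu_S(a\to s)=\nu_S(a\wedge(a\to s))\le\nu_S(s)=s.$$
Since $\nu_S(a)=1$, this yields $\nu_S(a\to s)\le s$. Because $\nu_S$ is inflationary, $a\to s\le\nu_S(a\to s)\le s$. Thus $s=a\to s\in\of a$, and so $S\sue\of a$. Nothing here seems to be a real obstacle; the only point to watch is using the nucleus property, rather than (Sl2) alone, to control $a\to s$.
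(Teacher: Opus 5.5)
Your proof is correct in both directions. The paper gives no proof of its own here; the lemma is simply quoted from \cite{PP}, so there is nothing to match step by step.

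Within the paper's framework, the shortest route would use the fact, recalled in Section 2, that $\of a$ and $\cf a$ are complements in $\SL(L)$. Then $S\sue\of a$ iff $S\cap\cf a=\OS$. Since $S\cap\cf a=\{s\in S\mid a\le s\}$, this equals $\OS$ exactly when $\nu_S(a)=1$, which is your ``first fact''.

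Your elementwise argument avoids invoking that complementation, whose own proof is a computation of the same kind, so it is more self-contained. Your ($\Leftarrow$) computation is in effect the special case $\nu_S(a)=1$ of the standard identity $a\to s=\nu_S(a)\to s$ for $s\in S$.

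One small correction to your closing remark: (Sl2) alone does suffice for ($\Leftarrow$). For $s\in S$, the element $(a\to s)\to s$ lies in $S$ by (Sl2). It also lies above $a$, because $a\wedge(a\to s)\le s$. By your first fact it therefore equals $1$, i.e.\ $a\to s\le s$. So nothing about $\nu_S$ beyond its defining formula is actually needed.
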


\begin{prop}\label{prop sup em sl}
    A locale $L$ is subfit if and only if
    $$S^\#=\{\tbigwedge K \mid K\sue \nu_S^{-1}[\OS] \} \quad \mbox{for every } S\in \SL(L).$$
\end{prop}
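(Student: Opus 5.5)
The plan is to show that, for \emph{every} locale $L$ and every $S\in\SL(L)$, the set on the right-hand side equals $S^{\sim\#}$. Once this is established, the statement follows from Corollary \ref{coro subfit equalities}, applied with $K=\SL(L)$ and $\mo=\of L$. That corollary says $\SL(L)$ is lifted-subfit iff $S^{\sim\#}=S^\#$ for all $S$. By the Remark after Definition \ref{defin subfit}, this lifted-subfitness is exactly subfitness of $L$.

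\textbf{Step 1: rewriting $S^{\sim\#}$ as a join of closed sublocales.} By definition, $S^\sim=\tbigcap\{\of a\mid S\sue\of a\}$. Property (6) of supplements turns the supplement of a meet into the join of the supplements. Since $\of a$ and $\cf a$ are complements in $\SL(L)$, we have $(\of a)^\#=\cf a$. Together these give
$$S^{\sim\#}=\tbigvee\{\cf a\mid S\sue\of a\}.$$
The preliminary lemma ($S\sue\of a\Leftrightarrow\nu_S(a)=1$) then rewrites the index set as $\nu_S^{-1}[\OS]$, since $\OS=\{1\}$. Hence
$$S^{\sim\#}=\tbigvee\{\cf a\mid a\in\nu_S^{-1}[\OS]\}.$$

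\textbf{Step 2: computing this join via the formula for joins in $\SL(L)$.} Joins of sublocales are the sets of all meets of subsets of the union. So $S^{\sim\#}$ consists of all $\tbigwedge M$ with $M\sue\tbigcup\{{\uparrow}a\mid \nu_S(a)=1\}$, because $\cf a={\uparrow}a$. The union is just $\nu_S^{-1}[\OS]$ itself, by two inclusions:
\begin{itemize}
\item It contains $\nu_S^{-1}[\OS]$, since $a\in{\uparrow}a$.
\item It is contained in $\nu_S^{-1}[\OS]$, since $\nu_S$ is monotone: $x\ge a$ and $\nu_S(a)=1$ give $\nu_S(x)=1$.
\end{itemize}
Therefore $S^{\sim\#}=\{\tbigwedge K\mid K\sue\nu_S^{-1}[\OS]\}$ for every $S$, with no separation hypothesis needed.

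\textbf{Step 3: concluding.} The displayed formula in the proposition says precisely that $S^\#=S^{\sim\#}$ for all $S\in\SL(L)$. By Corollary \ref{coro subfit equalities}, (1)$\Leftrightarrow$(2), this holds iff $\SL(L)$ is $\of L$-lifted-subfit, i.e.\ iff $L$ is subfit (Corollary \ref{coro subfit locales}).

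I do not expect a serious obstacle. The only points needing care are in Step 1: justifying $(\of a)^\#=\cf a$ from complementedness, and checking that property (6) applies to an arbitrary, possibly infinite, intersection. If the family $\{\of a\mid S\sue\of a\}$ is empty, then $S^\sim=L$ and the join is $\OS$. This degenerate case is consistent, since $\OS$ is the top ($\tbigwedge\emptyset=1$ is in every sublocale).
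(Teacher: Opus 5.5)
Your proposal is correct and follows the paper's proof essentially step for step. You reduce via Corollary \ref{coro subfit equalities} to showing $S^{\sim\#}=\{\tbigwedge K\mid K\sue\nu_S^{-1}[\OS]\}$ for every $S$, rewrite $S^{\sim\#}$ as $\tbigvee\{\cf a\mid \nu_S(a)=1\}$, and use monotonicity of $\nu_S$ to see that the union of these ${\uparrow}a$ is exactly $\nu_S^{-1}[\OS]$. Your closing remark about an empty family is moot, since $S\sue\of 1=L$ always holds and $\nu_S(1)=1$; also note that $\OS=\{1\}$ is the bottom of $\SL(L)$, not the top.
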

\begin{proof}
    By \ref{coro subfit equalities}, it suffices to prove that, for every $S\in \SL(L)$,
    $$S^{\sim \#}=\{\tbigwedge K \mid K\sue \nu_S^{-1}[\OS] \}.$$
    By the previous lemma, we have
    $$S^{\sim \#}=(\tbigcap\{\of a\mid S\sue \of a\})^\#=\tbigvee \{\cf a \mid a\in \nu_S^{-1}[\OS]\}.$$
    Now let $A= \tbigcup \{\cf a\mid  a\in \nu_S^{-1}[\OS]\}$. Clearly
    $S^{\sim \#}=\{\tbigwedge K \mid K\sue A\}.$
    Hence, it suffices to show that $A=\nu_S^{-1}[\OS]$.

    Clearly $\nu_S^{-1}[\OS]\sue A$. On the other hand, if $x\in A$ there exists some $a\in \nu_S^{-1}[\OS]$ such that $a\leq x$, hence  $1=\nu_S(a)\le \nu_S(x)$ and therefore $x\in \nu_S^{-1}[\OS]$.
\end{proof}

\begin{coro}\label{coro nu}
    A locale $L$ is subfit if and only if
    $$\nu_{S^\#}(x)=\tbigwedge\{ k \in \nu_S^{-1}[\OS]\mid x\le k \}$$
    for all  $S\in \SL(L)$  and all  $x\in L$.
\end{coro}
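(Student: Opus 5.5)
The plan is to derive Corollary \ref{coro nu} directly from Proposition \ref{prop sup em sl} together with Theorem \ref{sublocales nucleus}. Fix a sublocale $S$ and write $A=\nu_S^{-1}[\OS]=\{k\in L\mid \nu_S(k)=1\}$ and $T=\{\tbigwedge M\mid M\sue A\}$. By Proposition \ref{prop sup em sl}, $L$ is subfit if and only if $S^\#=T$ for every $S\in\SL(L)$. So it suffices to show, for a fixed $S$, that $S^\#=T$ holds if and only if $\nu_{S^\#}(x)=\tbigwedge\{k\in A\mid x\le k\}$ for all $x\in L$.

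First I would observe that $T$ is itself a sublocale; this is also implicit in the proof of Proposition \ref{prop sup em sl}, where $T$ is exhibited as the join $S^{\sim\#}=\tbigvee\{\cf a\mid a\in A\}$. Next I would compute its nucleus. By Theorem \ref{sublocales nucleus}, $\nu_T(x)=\tbigwedge\{t\in T\mid x\le t\}$. For $t=\tbigwedge M$ with $M\sue A$, the condition $x\le t$ means that $x\le m$ for every $m\in M$. Every element of $A$ is also in $T$, taking singleton $M$. Hence the set of elements of $T$ above $x$ and the set of elements of $A$ above $x$ have the same meet, namely $\nu_T(x)=\tbigwedge\{k\in A\mid x\le k\}$.

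Finally I would use the bijection between sublocales and nuclei from Theorem \ref{sublocales nucleus}: $\nu_S[L]=S$ and $\nu_{\nu[L]}=\nu$. Thus $S^\#=T$ if and only if $\nu_{S^\#}=\nu_T$, and the latter is exactly the displayed formula. Quantifying over all $S$ and applying Proposition \ref{prop sup em sl} gives the equivalence.

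There is no real obstacle here. The only point needing care is making sure $T$ is a sublocale, so that $\nu_T$ is meaningful and the bijection applies. This is settled by identifying $T$ with $S^{\sim\#}$, which is a join in $\SL(L)$, as in the preceding proof.
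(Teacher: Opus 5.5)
Your proposal is correct and matches the derivation the paper intends (the paper states this as an immediate corollary of Proposition \ref{prop sup em sl} without writing out a proof): compute the nucleus of $\{\tbigwedge K\mid K\sue\nu_S^{-1}[\OS]\}$ via Theorem \ref{sublocales nucleus}, then use the bijection between sublocales and nuclei. Your check that this set is a sublocale, because it equals $S^{\sim\#}$, fills in a detail the paper leaves implicit.
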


The particular case $S=\cf a$ yields the well known formula

\begin{coro}
    In any subfit locale $L$,
    $$a\to x =\tbigwedge\{k\in L \mid a\vee k=1 \mbox{ and } x\le k\},$$
    for every $a, x\in L$.
\end{coro}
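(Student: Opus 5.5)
The plan is to specialize Corollary \ref{coro nu} to $S=\cf a$. Let $L$ be subfit and fix $a,x\in L$.

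First, identify the supplement. In $\SL(L)$ the sublocales $\cf a$ and $\of a$ are complements, so $\cf a^\#=\of a$. Hence $\nu_{\cf a^\#}=\nu_{\of a}$.

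Second, compute $\nu_{\of a}(x)$. It is $\bigwedge\{a\to y\mid x\le a\to y\}$. Since $x\le a\to x$, this meet is at most $a\to x$. Conversely, if $x\le a\to y$, then $a\to x\le a\to(a\to y)=a\to y$. So $\nu_{\of a}(x)=a\to x$, the standard open nucleus.

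Third, identify $\nu_{\cf a}^{-1}[\OS]$. The nucleus of $\cf a$ is $\nu_{\cf a}(k)=\bigwedge\{s\ge a\mid k\le s\}=a\vee k$. Since $\OS=\{1\}$, we get $k\in\nu_{\cf a}^{-1}[\OS]$ if and only if $a\vee k=1$. The earlier lemma gives the same description: $\cf a\sue\of k$ iff $\nu_{\cf a}(k)=1$, which matches because $\cf a\sue \of k$ iff $a\vee k=1$.

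Finally, substitute these into Corollary \ref{coro nu}. This gives
\[
a\to x=\nu_{\cf a^\#}(x)=\tbigwedge\{k\in L\mid a\vee k=1 \text{ and } x\le k\},
\]
which is the claimed formula.

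No step is a real obstacle. The only care needed is checking the two nucleus formulas $\nu_{\cf a}(k)=a\vee k$ and $\nu_{\of a}(x)=a\to x$ directly from the definition of $\nu_S$ in Theorem \ref{sublocales nucleus}.
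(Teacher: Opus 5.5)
Your proposal is correct and follows the paper's own route: the paper obtains this formula as the special case $S=\cf a$ of Corollary~\ref{coro nu}. You have also written out the identifications $\cf a^\#=\of a$, $\nu_{\of a}(x)=a\to x$ and $\nu_{\cf a}^{-1}[\OS]=\{k\in L\mid a\vee k=1\}$, which the paper leaves implicit.
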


Using the Proposition \ref{prop sup em sl} and the well-known fact  \cite{remainders} that, for every $S\in \SL(L)$,
$$S^\circ=\of (\tbigwedge S^\#),$$
 it immediately follows that:
\begin{coro}
    Let $L$ be a subfit  locale and $S$ a sublocale of $L$. Then 
    $$S^\circ=\of (\tbigwedge \nu_s^{-1}[\OS]).$$
\end{coro}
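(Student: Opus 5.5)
We combine the formula for $S^\circ$ recalled from \cite{remainders} with Proposition \ref{prop sup em sl}. Since $L$ is subfit, Proposition \ref{prop sup em sl} gives
$$S^\#=\{\tbigwedge K \mid K\sue \nu_S^{-1}[\OS]\},$$
and we substitute this into $S^\circ=\of(\tbigwedge S^\#)$. It then remains to check that
$$\tbigwedge\{\tbigwedge K\mid K\sue \nu_S^{-1}[\OS]\}=\tbigwedge \nu_S^{-1}[\OS],$$
which is a short two-sided comparison in $L$.

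For ``$\le$'': each element $k\in\nu_S^{-1}[\OS]$ is of the form $\tbigwedge\{k\}$ with $\{k\}\sue\nu_S^{-1}[\OS]$. Hence $\tbigwedge S^\#\le k$ for all such $k$, and so $\tbigwedge S^\#\le\tbigwedge\nu_S^{-1}[\OS]$.

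For ``$\ge$'': for any $K\sue\nu_S^{-1}[\OS]$ we have $\tbigwedge K\ge\tbigwedge\nu_S^{-1}[\OS]$, since $K$ is a subset of $\nu_S^{-1}[\OS]$. Taking the meet over all such $K$ gives the reverse inequality. In particular, the case $K=\nu_S^{-1}[\OS]$ shows that $\tbigwedge\nu_S^{-1}[\OS]$ itself lies in $S^\#$, and it is the least element there.

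Combining both inequalities yields $S^\circ=\of(\tbigwedge\nu_S^{-1}[\OS])$. The lowercase subscript $\nu_s$ in the statement is read as $\nu_S$.

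There is no real obstacle here. The only point to keep straight is that the meet in $\tbigwedge S^\#$ is taken in $L$ over the elements of the sublocale $S^\#$, not in $\SL(L)$, and this is exactly how it is used in the formula from \cite{remainders}.
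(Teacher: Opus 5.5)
Your proposal is correct and follows the paper's route: substitute the formula for $S^\#$ from Proposition~\ref{prop sup em sl} into $S^\circ=\of(\tbigwedge S^\#)$. The paper says only that this ``immediately follows''; your check that $\tbigwedge\nu_S^{-1}[\OS]$ is itself the least element of $S^\#$ correctly fills in that meet computation.
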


\subsection{A result on adjunctions}
Recall that a localic map $f\colon L\to M$ is said to be open if 
$f[\of a]\in \of M$ for all $ a\in L.$

Open maps between locales are characterized by the celebrated \emph{Joyal-Tierney's Theorem} \cite{JT84}:

\begin{quote}
{\em
A localic map $f \colon L \to M$ is open if and only if its adjoint frame homomorphism
$f^*\colon M\to L$  has itself a left adjoint and preserves the Heyting operation. }
\end{quote}
In the case where $M$ is a subfit locale, we have a more elegant characterization of open maps \cite{PPsep}.

\begin{theo}\label{open subfit}
    Let $L$ and $M$ be locales, with $M$ subfit. Then a localic map $f \colon L \to M$ is open if and only if its adjoint frame homomorphism
$f^*\colon M\to L$ has itself a left adjoint.
\end{theo}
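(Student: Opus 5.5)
The plan is to prove both directions by direct computation with the adjunction $f[S]\sue T \Leftrightarrow S\sue f_{-1}[T]$ and the formula $f_{-1}[\of b]=\of(f^*(b))$. Subfitness of $M$ will enter only through Corollary \ref{coro subfit in the opens}, applied to the coframe $\SL(M)$ with $\mo=\of M$.

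First I derive a single equivalence used in both directions. For $a\in L$ and $c\in M$,
$$f[\of a]\sue \of c \Leftrightarrow \of a\sue f_{-1}[\of c]=\of(f^*(c)) \Leftrightarrow a\le f^*(c),$$
where the last step uses that $\of$ is an order embedding of $L$ into $\SL(L)$.

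($\Rightarrow$) Suppose $f$ is open, so for each $a\in L$ there is $b_a\in M$ with $f[\of a]=\of b_a$. Since $\of b_a\sue \of c$ iff $b_a\le c$, the equivalence above becomes $b_a\le c \Leftrightarrow a\le f^*(c)$. Hence $a\mapsto b_a$ is a left adjoint of $f^*$. Here subfitness of $M$ is not needed, and neither is Joyal--Tierney's Theorem.

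($\Leftarrow$) Suppose $f^*$ has a left adjoint $f_!$. The equivalence then reads $f[\of a]\sue\of c \Leftrightarrow f_!(a)\le c$. Computing the $\of$-closure gives
$$f[\of a]^\sim=\tbigcap\{\of c\mid f_!(a)\le c\}=\of(f_!(a)),$$
so the $\of$-closure of $f[\of a]$ is open. Since $M$ is subfit, $\SL(M)$ is $\of M$-lifted-subfit (Corollary \ref{coro subfit locales}). Corollary \ref{coro subfit in the opens} then says that $S^\sim=\of b$ forces $S=\of b$. Therefore $f[\of a]=\of(f_!(a))$ is open, and $f$ is open.

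I expect the main obstacle to be recognizing that $f[\of a]^\sim$ is exactly $\of(f_!(a))$. Once that identification is made, lifted-subfitness in the form of Corollary \ref{coro subfit in the opens} finishes the argument immediately. No preservation of the Heyting operation is required, and no argument by contradiction is needed.
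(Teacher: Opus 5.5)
Your proof is correct and is essentially the paper's argument. The paper obtains the theorem from Proposition \ref{theo adj}, applied to $f[-]\dashv f_{-1}[-]$ between $\SL(L)$ and $\SL(M)$, together with a lemma transferring a left adjoint of $f_{-1}^\of$ to one of $f^*$; the core of Proposition \ref{theo adj} is exactly your identification $f[\of a]^\sim=\of(f_!(a))$ followed by Corollary \ref{coro subfit in the opens}, so you have simply inlined these steps and worked directly with $f^*$ and $f_!$, which is shorter but less general.
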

Our goal in this section is to generalize this result. We will do it by studying the behavior of adjunctions between a poset and a lifted-subfit coframe. We will then be able to derive the previous theorem by applying our result to the adjunction between the set-theoretical image of $f$ and the localic preimage of $f$ (which happens between the poset $\SL(L)$ and the $\of L$-lifted-subfit coframe $\SL(M)$).

\medskip
\begin{prop}\label{theo adj}
    Let $P$ be a poset and $K$ be a coframe. Let also $g\colon P\to K$ and $h\colon K\to P$ be such that $g\dashv h$. Consider further any $\mo_P\sue P$ and $\mo_K\sue K$ such that $K$ is $\mo_K$-lifted-subfit and $h[\mo_K]\sue \mo_P$. Then the map
          \begin{equation*}
		\xymatrix@C=45pt@R=10pt{
			h_\mo\colon\mo_K \ar[r]^{}  &\mo_P \\
			\hspace*{10mm} b \ar@{|->}[r]_{} & h(b)}
	\end{equation*}
    
    has a left adjoint $h_\mo^\ast$ if and only if $g[\mo_P]\sue \mo_K$. In that case 
    $$h_\mo^\ast(a)=g(a) \mbox{ for all } a\in \mo_K.$$
\end{prop}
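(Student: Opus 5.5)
The plan is to check the two implications separately. The direction ``$g[\mo_P]\sue \mo_K$ implies a left adjoint exists'' is formal. The converse is where lifted-subfitness enters, through Corollary \ref{coro subfit in the opens}. Throughout, $\mo_K$ is assumed to satisfy the standing hypotheses of Section \ref{sec oclo cint} (complemented elements, closed under finite joins, containing $1$), and $\sim$ denotes the $\mo_K$-closure on $K$. The formula at the end should be read for $a\in\mo_P$, since $h^\ast_\mo$ is defined on $\mo_P$.

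First, suppose $g[\mo_P]\sue\mo_K$. Then $g$ restricts to a map $g_\mo\colon\mo_P\to\mo_K$. For $a\in\mo_P$ and $b\in\mo_K$, the adjunction $g\dashv h$ gives
$$g_\mo(a)\le b\iff g(a)\le b\iff a\le h(b)=h_\mo(b),$$
because the orders on $\mo_P$ and $\mo_K$ are inherited. Hence $g_\mo\dashv h_\mo$, so $h^\ast_\mo$ exists and equals $g$ on $\mo_P$. By uniqueness of adjoints this also settles the final formula in either direction.

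Conversely, suppose $h_\mo$ has a left adjoint $k\colon\mo_P\to\mo_K$, and fix $a\in\mo_P$. The key step is to compare $g(a)$ and $k(a)$ through their upper bounds in $\mo_K$. For every $b\in\mo_K$,
$$k(a)\le b\iff a\le h(b)\iff g(a)\le b.$$
So $g(a)$ and $k(a)$ have exactly the same upper bounds in $\mo_K$, and therefore $g(a)^\sim=k(a)^\sim$. Since $k(a)\in\mo_K$, we have $k(a)^\sim=k(a)$, and so $g(a)^\sim=k(a)\in\mo_K$.

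Now apply Corollary \ref{coro subfit in the opens}: in the $\mo_K$-lifted-subfit coframe $K$, $s^\sim=c$ with $c\in\mo_K$ forces $s=c$. Taking $s=g(a)$ and $c=k(a)$ gives $g(a)=k(a)\in\mo_K$. This proves $g[\mo_P]\sue\mo_K$ and $h^\ast_\mo=g|_{\mo_P}$.

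The only non-formal step is passing from ``same $\mo_K$-closure'' to ``equal'', and Corollary \ref{coro subfit in the opens} is exactly the tool for it. I expect no further obstacle beyond checking that $\mo_K$ meets the standing hypotheses, so that this corollary applies.
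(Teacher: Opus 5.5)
Your proof is correct and follows the paper's argument essentially verbatim. Both compare upper bounds in $\mo_K$ to get $g(a)^\sim=h_\mo^\ast(a)^\sim=h_\mo^\ast(a)$, then apply Corollary~\ref{coro subfit in the opens} to force $g(a)=h_\mo^\ast(a)\in\mo_K$, with the other direction formal; your reading of the final formula as holding for $a\in\mo_P$, and your remark that $\mo_K$ must satisfy the standing hypotheses of Section~\ref{sec oclo cint} for that corollary to apply, are both correct clarifications of what the paper leaves implicit.
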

\begin{proof}
    Suppose $h_\mo$ has a left adjoint $h_\mo^\ast$. Then for every $a, b \in \mo_K$
    $$h_\mo^\ast(a) \le b \Leftrightarrow a \le h(b) \Leftrightarrow g(a)\le b.$$
    Since $h_\mo^\ast(a)\in \mo_K$ this implies that $g(a)^\sim=h_\mo^\ast(a)^\sim=h_\mo^\ast(a)$.  Because $K$ is $\mo_K$-lifted-subfit and $h_\mo^\ast(a)\in \mo_K$, we can apply Corollary \ref{coro subfit in the opens} to conclude that
    $$g(a)=h_\mo^\ast(a) \in \mo_K \quad \mbox{ for all } a \in \mo_P.$$
    The other implication is trivial.
\end{proof}
To see how the previous proposition generalizes the already known result for the case of locales, we will use it to derive Theorem \ref{open subfit}. We will need the following technical lemma.

\begin{lemm}
    Let $f\colon L\to M$ be a localic map between locales. Then the map
          \begin{equation*}
		\xymatrix@C=45pt@R=10pt{
			f_{-1}^\of[-] \colon\of M \ar[r]^{}  &\of L \\
			\hspace*{10mm} \of b \ar@{|->}[r]_{} & f_{-1}[\of b]}
	\end{equation*}
    has a left adjoint $h\colon\of L\to \of  M$  if and only if $f^\ast \colon M \to L$ has a left adjoint $f_!\colon L\to  M$, and in that case
    $$h[\of a]=\of (f_!(a))\quad \mbox{for all }a \in L.$$
\end{lemm}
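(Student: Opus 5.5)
The key observation is that $\of\colon L\to\of L$, $a\mapsto\of a$, is an order isomorphism; the same holds for $M$. I will transport the adjunction question along these isomorphisms. Since $f_{-1}[\of b]=\of(f^*(b))$, the map $f^\of_{-1}[-]$ corresponds, under the isomorphisms $\of\colon M\cong\of M$ and $\of\colon L\cong \of L$, to $f^*\colon M\to L$. That is, $f^\of_{-1}[-]\circ\of_M=\of_L\circ f^*$. Existence of left adjoints is invariant under composition with order isomorphisms, so one map has a left adjoint exactly when the other does.

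Concretely, in one direction I assume $f_!\dashv f^*$ and define $h(\of a)=\of(f_!(a))$. This is well defined because $\of a=\of a'$ implies $a=a'$. The adjunction is then checked directly:
$$h(\of a)\sue \of b \iff \of(f_!(a))\sue\of b\iff f_!(a)\le b\iff a\le f^*(b)\iff \of a\sue \of(f^*(b))=f_{-1}[\of b].$$
Conversely, given a left adjoint $h\colon\of L\to\of M$, each $h(\of a)$ is open. So I write $h(\of a)=\of(f_!(a))$ for a uniquely determined $f_!(a)\in M$. The same chain of equivalences, read in reverse, gives $f_!(a)\le b\iff a\le f^*(b)$, so $f_!\dashv f^*$. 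The formula $h[\of a]=\of(f_!(a))$ holds by construction in the first direction and by definition in the second. In the second direction it is also consistent with uniqueness of adjoints, since left adjoints are unique.

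I do not expect a genuine obstacle. The only points needing care are the injectivity and order-reflection of $a\mapsto\of a$, namely $\of a\sue\of b\iff a\le b$, which is standard (it follows from $\of$ being a frame isomorphism onto $\of L$), and the formula $f_{-1}[\of b]=\of(f^*(b))$ recalled in the preliminaries.
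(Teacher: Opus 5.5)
Your proposal is correct and follows essentially the paper's argument: both transport the adjunction along the order isomorphisms $\of\colon L\to\of L$ and $\of\colon M\to\of M$, using $f_{-1}[\of b]=\of(f^\ast(b))$, i.e.\ the commutative square relating $f^{\of}_{-1}$ and $f^\ast$. Your explicit chain of equivalences spells out what the paper leaves to its commutative diagram, including the formula $h[\of a]=\of(f_!(a))$.
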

\begin{proof}
      Since $f$ is a localic map  $f_{-1}^\of[\of b]=\of (f^\ast (b))$. Moreover, since the map 
\begin{equation*}
		\xymatrix@C=45pt@R=10pt{
			\of \colon L  \ar[r]^{}  & \of L \\
			\hspace*{10mm} a \ar@{|->}[r]_{} & \of a}
	\end{equation*}
    is an isomorphism, there exists a one-to-one correspondence between maps $h\colon\of L\to \of  M$ and maps $f_!\colon L\to  M$, such that for each $h$, the associated map $f_!$ makes the following diagram commute:
\begin{center}
\begin{tikzcd}[column sep=large, row sep=large]
\of L \arrow[r] \arrow[r, "h"] & \of  M \arrow[r] \arrow[r, "f_{-1}^\of"] & \of L \\
L \arrow[u] \arrow[u, "\of"] \arrow[r, "f_!"]  &   M \arrow[u] \arrow[u, "\of"] \arrow[r, "f^\ast"] &  L \arrow [u] \arrow[u, "\of"].
\end{tikzcd}
\end{center}
Now, we use the fact that $\of\colon L\to \of L$ is an isomorphism and the commutativity of the previous diagram to deduce that $f_{-1}^\of$ has a left adjoint $h\colon\of L\to \of  M$  if and only if $f^\ast$  has a left adjoint $f_!\colon L\to  M$.
\end{proof}

\begin{theo}
    Let $L$ and $M$ be locales, with $M$ subfit. Then a localic map $f \colon L \to M$ is open if and only if its adjoint frame homomorphism
$f^*\colon M\to L$ has itself a left adjoint.
\end{theo}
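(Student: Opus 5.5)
We apply Proposition \ref{theo adj} with $P=\SL(L)$, $K=\SL(M)$, $\mo_P=\of L$, $\mo_K=\of M$, $g=f[-]$ (set-theoretic image) and $h=f_{-1}[-]$ (localic preimage). The required adjunction $g\dashv h$ is the one recalled in the preliminaries, $f[S]\sue T\Leftrightarrow S\sue f_{-1}[T]$. Since $M$ is subfit, Corollary \ref{coro subfit locales} (equivalently, the remark after Definition \ref{defin subfit}) shows that $\SL(M)$ is $\of M$-lifted-subfit. Finally, the formula $f_{-1}[\of b]=\of(f^\ast(b))$ gives $h[\of M]\sue\of L$. So all hypotheses of Proposition \ref{theo adj} hold.

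The proposition then says that the restriction $h_\mo=f^\of_{-1}[-]\colon\of M\to\of L$ has a left adjoint if and only if $g[\of L]\sue\of M$. The latter condition says precisely that $f[\of a]\in\of M$ for every $a\in L$, which is the definition of $f$ being open. So $f$ is open if and only if $f^\of_{-1}$ has a left adjoint.

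To finish, we use the technical lemma just proved. It states that $f^\of_{-1}\colon\of M\to\of L$ has a left adjoint exactly when $f^\ast\colon M\to L$ has a left adjoint $f_!$. Chaining the two equivalences shows that $f$ is open if and only if $f^\ast$ has a left adjoint. As a by-product, Proposition \ref{theo adj} and the lemma together give $f[\of a]=\of(f_!(a))$.

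The only delicate point is the bookkeeping in the domain and codomain of the restricted maps. In Proposition \ref{theo adj}, $h_\mo$ goes from $\mo_K=\of M$ to $\mo_P=\of L$, and its left adjoint goes back from $\of L$ to $\of M$. (The proposition's statement writes ``$a\in\mo_K$'' where $a\in\mo_P$ is meant.) I expect this to be routine rather than a genuine obstacle, since the substantive work was already done in Corollary \ref{coro subfit in the opens}.
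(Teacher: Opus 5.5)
Your proof is correct and follows the paper's argument exactly: you apply Proposition \ref{theo adj} to the image/preimage adjunction between $\SL(L)$ and the $\of M$-lifted-subfit coframe $\SL(M)$, and then transfer the left adjoint of $f^{\of}_{-1}$ to a left adjoint of $f^\ast$ via the technical lemma. You also spell out hypotheses the paper leaves implicit, notably $h[\of M]\sue\of L$ via $f_{-1}[\of b]=\of(f^\ast(b))$, and you are right that the proposition's closing formula should range over $a\in\mo_P$ rather than $a\in\mo_K$.
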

\begin{proof}
Observe that we can apply Proposition \ref{theo adj} to the case where $P=\SL(L)$, $K=\SL(M)$, $\mo_P=\of L$,  $\mo_K=\of M$  and the maps $g$ and $h$ there, here become the set-theoretical image of $f$ and the localic preimage of $f$.

    Hence, we can conclude that the map
              \begin{equation*}
		\xymatrix@C=45pt@R=10pt{
			f_{-1}^\of[-] \colon\of M \ar[r]^{}  &\of L \\
			\hspace*{10mm} \of b \ar@{|->}[r]_{} & f_{-1}[\of b]}
	\end{equation*}
    has a left adjoint if and only if $f[\of L]\sue \of M$. The result then follows from the previous lemma.
\end{proof}

\section{Separation conditions generalizing fitness}\label{sec lifted fit}
\subsection{$\mo$-lifted-fitness}
As we have seen, the fitness condition on a locale may be characterized by the behavior of the coframe of its sublocales. In fact, by 
 Corollary \ref{coro fit locales} a locale $L$ is fit if and only if
 \begin{equation}\label{fit}
S^\sim=S \quad\mbox{ for all } S \in \SL(L).
 \end{equation}
The following definition extends condition (\ref{fit}) from the coframe $\SL(L)$ to every coframe $K$ and every $\mo$-closure.
\begin{defin}
    We say that a coframe $K$ is $\mo$-lifted-fit if
    $$s^\sim=s \quad \mbox{ for all } s\in K.$$
\end{defin}

 Whenever clear we are dealing with the $\mo$ fixed in Section \ref{sec oclo cint}, we will simply write $K$ is lifted-fit, instead of $\mo$-lifted-fit.

\begin{rema}It is important to keep in mind that a locale $L$ is fit if and only if $\SL(L)$ is $\of L$-lifted-fit. 
\[\begin{tikzcd}[ row sep=1cm, column sep=1cm]
\SL(L)&   \of L\textup{-lifted-fit}\\
 L   \arrow[u, smalltext=lift]&  \textup{fit} \arrow[u, Leftrightarrow ]
\end{tikzcd}\]
\end{rema}
This approach allows us to unify notions from point-free and classical topology.
\begin{prop}\label{prop fit top}
    Let $(X,\tau)$ be a topological space, the coframe $\mathcal{P}(X)$ is $\tau$-lifted-fit if and only if $(X, \tau)$ is $T_1$.   
\end{prop}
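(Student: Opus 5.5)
We have to show that $s^\sim=s$ for every $s\subseteq X$ if and only if $(X,\tau)$ is $T_1$. Here $K=\mathcal{P}(X)$ and $\mo=\tau$. Every open set has a complement in $\mathcal{P}(X)$, and $\tau$ is closed under finite unions and contains $X$, so we are in the setting of Section \ref{sec oclo cint}. In this case $s^\sim=\tbigcap\{U\in\tau\mid s\subseteq U\}$ is the saturation of $s$. Since $\mathcal{P}(X)$ is a complete Boolean algebra, supplements are just set complements.

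For ($\Leftarrow$), assume $(X,\tau)$ is $T_1$ and let $s\subseteq X$. Because $\sim$ is a closure operator, it suffices to show $s^\sim\subseteq s$. Take $x\notin s$. By $T_1$ the singleton $\{x\}$ is closed, so $U=X\setminus\{x\}$ is open. It contains $s$ but not $x$, hence $x\notin s^\sim$. Equivalently, we can write $s=\tbigcap_{x\notin s}(X\setminus\{x\})$, which is an intersection of opens containing $s$.

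For ($\Rightarrow$), assume $s^\sim=s$ for all $s$. Let $x\neq y$ be points of $X$, and apply the hypothesis to $s=\{x\}$. Then $\{x\}=\tbigcap\{U\in\tau\mid x\in U\}$, so $y$ lies outside some open $U$ containing $x$. Swapping the roles of $x$ and $y$ gives the symmetric separation, and this is exactly the $T_1$ axiom. It is also worth noting, for consistency with Proposition \ref{t1 subfit}, that lifted-fit implies lifted-subfit trivially, since $s^\sim=s$.

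No step is a real obstacle. The only points needing care are checking that $(\mathcal{P}(X),\tau)$ fits the standing assumptions and recognizing that the $\mo$-closure is the saturation.
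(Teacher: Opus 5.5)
Your proof is correct. The paper states this proposition without proof, as it does Proposition \ref{t1 subfit}, so your direct verification is exactly the routine check it leaves implicit. That verification identifies $s^\sim$ with the saturation $\bigcap\{U\in\tau\mid s\subseteq U\}$. It uses the opens $X\setminus\{x\}$ for the direction from $T_1$ to lifted-fit, and applies $s^\sim=s$ to singletons for the converse.

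The paper returns to this statement only in the remark after Proposition \ref{box id}. There it observes that, since $\mathcal{P}(X)$ is Boolean, the agreement of Propositions \ref{t1 subfit} and \ref{prop fit top} is an instance of that proposition. Read the other way, this gives an alternative route. For the Boolean coframe $\mathcal{P}(X)$, Proposition \ref{box id} makes $\tau$-lifted-fitness equivalent to $s^\Box=s$ for all $s\subseteq X$, that is, to every subset being a union of closed sets. That condition holds exactly when all singletons are closed.

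That route proves Propositions \ref{t1 subfit} and \ref{prop fit top} together from the general theory. Your argument is self-contained and shorter for this single statement. It also never uses the Boolean structure: your remark that supplements are set complements is not needed anywhere in it.
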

\begin{prop}
    The following statements about a coframe $K$ are equivalent:
    \begin{enumerate}
        \item $K$ is lifted-fit.
        \item Let $s, t \in K$. If for any  $a\in \mo$
        $s\leq a \Rightarrow t\leq a,$
        then $t\leq s$.
    \end{enumerate}
\end{prop}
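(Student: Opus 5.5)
The equivalence reduces to the elementary fact that $s^\sim$ is the meet of the elements of $\mo$ above $s$, so the condition in (2) is just the inequality $t\le s^\sim$ in disguise. I will therefore prove both implications directly from the definition of the $\mo$-closure, with no appeal to the earlier structural results beyond the fact that $\sim$ is a closure operator.

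The key observation, used in both directions, is the following. For $s,t\in K$ the hypothesis of (2), namely that for every $a\in\mo$ we have $s\le a\Rightarrow t\le a$, holds if and only if $t\le s^\sim$. Indeed, if the implication holds for all $a\in\mo$, then $t$ is a lower bound of $\{a\in\mo\mid s\le a\}$, so $t\le \tbigwedge\{a\in\mo\mid s\le a\}=s^\sim$. Conversely, if $t\le s^\sim$ and $a\in\mo$ with $s\le a$, then $t\le s^\sim\le a$.

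For (1)$\Rightarrow$(2), assume $K$ is lifted-fit and take $s,t$ satisfying the hypothesis of (2). By the observation, $t\le s^\sim$, and lifted-fitness gives $s^\sim=s$, so $t\le s$.

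For (2)$\Rightarrow$(1), fix $s\in K$. Since $\sim$ is a closure operator, $s\le s^\sim$, so it remains to show $s^\sim\le s$. Put $t=s^\sim$. By the observation (with $t\le s^\sim$ trivially true), the pair $s,t$ satisfies the hypothesis of (2), hence $s^\sim=t\le s$. Thus $s^\sim=s$ for every $s\in K$, i.e. $K$ is lifted-fit.

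There is no genuine obstacle here. The only point needing a moment's care is to recognise that the hypothesis of (2) is exactly the inequality $t\le s^\sim$. Once that is stated, each direction is a single line.
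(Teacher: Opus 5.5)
Your proof is correct and follows essentially the same route as the paper: for (2)$\Rightarrow$(1) the paper also applies (2) with $t=s^\sim$, using that every $a\in\mo$ above $s$ is also above $s^\sim$. The only cosmetic difference is in (1)$\Rightarrow$(2), where the paper derives $t^\sim\le s^\sim$ and then uses $t=t^\sim$, whereas you derive $t\le s^\sim$ directly, which is slightly more economical.
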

\begin{proof}
    Suppose $K$ is lifted-fit and  $s, t \in K$. If 
        $s\leq a \Rightarrow t\leq a$ for every  $a\in \mo$,
        then $t^\sim \leq s^\sim$, and therefore $t\leq s$.

    On the other hand, observe that
        $s\leq a \Rightarrow s^\sim\leq a.$
    Therefore $s^\sim \leq s$.
\end{proof}

\subsection{Quasi lifted-fitness: A weaker version of lifted-fitness}
We now look at an alternative characterization of fitness of a locale $L$ in terms of the properties of its sublocales.

As we have seen in Corollary \ref{coro fit locales}, a locale $L$ is fit if and only if
\begin{equation}\label{qlf}
\cf a^\sim=\cf a \quad\mbox{ for all } a \in L.
\end{equation}
The following definition extends condition (\ref{qlf}) from the coframe $\SL(L)$ to every coframe $K$ and every $\mo$-closure.
\begin{defin}
    We say that a coframe $K$ is quasi $\mo$-lifted-fit if
    $$c^\sim=c \quad \mbox{for all }c \in \mc.$$
\end{defin}
 Whenever clear we are dealing with the $\mo$ fixed in Section \ref{sec oclo cint},  we will simply write $K$ is quasi lifted-fit, instead of quasi $\mo$-lifted-fit.

We now describe the relation between lifted-fitness, quasi lifted-fitness, and the operators $\sim$, $\Box$, and $\#$. 
\begin{prop}
    Consider the following statements about a coframe $K$:
    \begin{enumerate}
        \item $K$ is lifted-fit.
        \item For every $s\in K$, $s^{\#\sim}=s^{\sim\#}$.
        \item For every $s\in K$, $s^{\#\#\sim}=s^{\Box}$.
        \item $K$ is quasi lifted-fit.
    \end{enumerate}
Then $(1)\Rightarrow (2)\Rightarrow (3)\Rightarrow (4)$.
\end{prop}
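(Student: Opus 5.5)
We prove the chain of implications one at a time, each by a direct computation with the operators, using Proposition \ref{boxsim} and the basic supplement identities listed in the preliminaries.

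\emph{(1)$\Rightarrow$(2).} If $K$ is lifted-fit, then $t^\sim=t$ for every $t\in K$. Taking $t=s^\#$ and $t=s$ gives $s^{\#\sim}=s^\#$ and $s^{\sim\#}=s^\#$. Hence both sides of (2) equal $s^\#$.

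\emph{(2)$\Rightarrow$(3).} Apply (2) to the element $s^\#$, which gives $s^{\#\#\sim}=s^{\#\sim\#}$. By Proposition \ref{boxsim}, the right-hand side is $s^\Box$. So no further work is needed here.

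\emph{(3)$\Rightarrow$(4).} Let $c\in\mc$, so $c=a^\#$ for some $a\in\mo$. Since $a$ is complemented, its complement $a^\#$ is complemented too, with complement $a$. Property (8) then gives $c^{\#\#}=c$. Applying (3) to $s=c$ yields
\[
c^\sim=c^{\#\#\sim}=c^\Box .
\]
Because $c\in\mc$ and $\Box$ is an interior operator whose open elements include $\mc$, we have $c^\Box=c$. Therefore $c^\sim=c$, which is quasi lifted-fitness.

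\emph{Where care is needed.} The only delicate point is the last step: we must make sure that $c\in\mc$ is complemented, so that $c^{\#\#}=c$, and that $c^\Box=c$. The latter holds because $c$ itself appears in the join defining $c^\Box$, giving $c\le c^\Box$, while $c^\Box\le c$ holds trivially.
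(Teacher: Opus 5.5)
Your proposal is correct and follows the paper's proof essentially line for line: (1)$\Rightarrow$(2) is immediate, (2)$\Rightarrow$(3) applies (2) to $s^\#$ and then Proposition \ref{boxsim}, and (3)$\Rightarrow$(4) takes $s=c\in\mc$ and uses $c^{\#\#}=c$ and $c^\Box=c$. Your justifications of the two facts about $c$ are details the paper leaves implicit.
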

\begin{proof}
    $(1)\Rightarrow (2)$: Trivial.

    $(2)\Rightarrow (3)$: By Proposition \ref{boxsim} we have
    $(s^\#)^{\#\sim}=s^{\#\sim\#}=s^{\Box}.$
        
    $(3)\Rightarrow (4)$: Take $s=c\in \mc$, then
    $c^\sim =c^{\#\# \sim}= c ^{\Box} = c.$
\end{proof}

It is an easy exercise to check the following proposition.
\begin{prop}
    A coframe $K$ is quasi lifted-fit if and only if
    $$s^\sim\le c \Leftrightarrow s\leq c \quad \mbox{ for all } c\in \mc \mbox{ and } s\in K.$$
\end{prop}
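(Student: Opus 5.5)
The plan is to prove the two implications separately, using only two facts: $\sim$ is a closure operator, and quasi lifted-fitness means $c^\sim=c$ for every $c\in\mc$.

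Assume first that $K$ is quasi lifted-fit, and fix $c\in\mc$ and $s\in K$. If $s^\sim\le c$, then $s\le s^\sim\le c$ because $\sim$ is inflationary. If $s\le c$, monotonicity of $\sim$ gives $s^\sim\le c^\sim$, and $c^\sim=c$ by hypothesis, so $s^\sim\le c$. This proves the equivalence $s^\sim\le c\Leftrightarrow s\le c$.

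For the converse, assume the equivalence holds for all $c\in\mc$ and $s\in K$. Take $c\in\mc$ and apply the equivalence with $s=c$. Since $c\le c$, it yields $c^\sim\le c$. Extensivity gives $c\le c^\sim$, hence $c^\sim=c$. As $c\in\mc$ was arbitrary, $K$ is quasi lifted-fit.

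Neither direction presents a real obstacle. The only point to check is that the converse uses the instance $s=c$. This is legitimate because $\mc\sue K$, so $c$ is an admissible choice of $s$.
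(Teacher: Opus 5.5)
Your proof is correct: the forward direction needs only extensivity and monotonicity of ${}^\sim$ together with $c^\sim=c$, and the converse follows by taking $s=c$. The paper gives no proof, calling the proposition an easy exercise, and your argument is the routine verification that remark refers to.
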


\subsection{Summarizing}

We will now present the relations between the separation conditions we have been studying.
\begin{lemm}
    Let $K$ be a coframe. If $K$ is quasi lifted-fit, then it is quasi lifted-subfit.
\end{lemm}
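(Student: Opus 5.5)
We want: if $c^\sim=c$ for all $c\in\mc$, then $a^\Box=a$ for all $a\in\mo$. The plan is to pass to supplements and use Proposition \ref{boxsim}, which gives $a^\Box=a^{\#\sim\#}$.

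Fix $a\in\mo$. Since $a$ is complemented, its supplement $a^\#$ is its complement, and by definition of $\mc$ we have $a^\#\in\mc$. The quasi lifted-fit hypothesis then gives $(a^\#)^\sim=a^\#$. Substituting into Proposition \ref{boxsim}, we get
$$a^\Box=a^{\#\sim\#}=a^{\#\#}.$$
By property (8) of supplements for complemented elements, $a^{\#\#}=a$. Hence $a^\Box=a$, which is exactly quasi lifted-subfitness.

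There is no real obstacle here. The only points to check are that $a^\#\in\mc$ (immediate from $\mc=\{a^\#\mid a\in\mo\}$) and that $a^{\#\#}=a$ is valid because elements of $\mo$ are complemented. Alternatively, one can argue directly: $a^\#\le a^{\#\sim}$ always holds, and the hypothesis makes this an equality, so the supplement formula collapses.
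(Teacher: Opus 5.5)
Your proof is correct and is essentially the paper's argument. The paper computes $a^\Box=(a^\#)^{\#\Box}=(a^{\#\sim})^\#=a^{\#\#}=a$ via Corollary~\ref{sup} applied to the complemented element $a^\#$, while you reach the same chain $a^\Box=a^{\#\sim\#}=a^{\#\#}=a$ one step more directly from Proposition~\ref{boxsim}.
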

\begin{proof}
    Let $a\in \mo$. Since $a$ is complemented, we can apply Corollary \ref{sup} to deduce that
    $$a^\Box=(a^\#)^{\#\Box}=(a ^{\#\sim})^\#=a^{\#\#}=a.\qedhere$$
\end{proof}
The following diagram shows the relations between all the separation conditions previously studied, where an arrow $\to$ stands for implies.
\[\begin{tikzcd}[cells={nodes={draw}}, row sep=1cm, column sep=1cm]
& \textup{lifted-subfit} \arrow[dr, ] & &\\
\textup{lifted-fit}     \arrow[ur, ]     \arrow[dr,]& & \textup{quasi lifted-subfit} \arrow[r, ]
    &\mo \textup{ is subfit}\\
& \textup{quasi lifted-fit}\arrow[ru, ]& &
\end{tikzcd}\]

\subsection{``Dualizing" lifted-fitness}

A natural question that arises when dealing with lifted-fitness is ``What if, rather than demanding that 
$$s^\sim=s \quad \mbox{for all } s \in K,$$
we demand instead that
$$s^\Box=s \quad \mbox{for all } s \in K?\mbox{"}$$
It turns out that this ``dual version of lifted-fitness" is far stronger than  lifted-fitness; in fact, we have:
\begin{prop}\label{box id}
    The following conditions on a coframe $K$ are equivalent:
    \begin{enumerate}
        \item For every $s\in K$, $s^\Box=s$.
        \item  $K$ is lifted-fit and Boolean.
        \item  $K$ is lifted-subfit and Boolean.
    \end{enumerate}
\end{prop}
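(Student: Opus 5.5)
I will prove the equivalences via the cycle $(1)\Rightarrow(2)\Rightarrow(3)\Rightarrow(1)$, using the duality $s^{\#\sim\#}=s^\Box$ of Proposition \ref{boxsim} and the fact that every element of $\mc$ is complemented.

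\emph{(1)$\Rightarrow$(2).} Assume $s^\Box=s$ for every $s\in K$.
\begin{itemize}
\item Every $s$ is then a join of elements of $\mc$.
\item By Corollary \ref{sup}(2), $s=s^\Box=s^{\#\#\Box}=s^{\#\#}$ for every $s$.
\item Boolean: since $s=s^{\#\#}$ for all $s$, we have $K=K_\#$, which is a Boolean algebra. To be explicit, I show every element is complemented. First, $s^\#\wedge s^{\#\#}\le s^{\#\#\#\#}$, and by (4) we get $s^\#=s^{\#\#\#}$. A cleaner route: in a coframe, $K_\#$ being Boolean with complement $\#$ gives $s\wedge s^\#=s^{\#\#}\wedge s^{\#}=0$. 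If needed, I will check directly that $s\wedge s^\#\in K$ satisfies $(s\wedge s^\#)^\Box=s\wedge s^\#$, so it is a join of $c\in\mc$ with $c\le s$ and $c\le s^\#$. For such $c$, since $c$ is complemented, $c\le s^\#$ gives $s\le c^\#$ by property (7) applied with $s=s^{\#\#}$; then $c\le s\le c^\#$ forces $c=c\wedge c^\#=0$. Hence $s\wedge s^\#=0$.
\item Lifted-fit: in a Boolean coframe every $s$ is complemented, so Corollary \ref{sup}(3) gives $s^{\#\Box}=s^{\sim\#}$. Applying (1) to $s^\#$ yields $s^\#=s^{\sim\#}$. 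Taking supplements and using $t^{\#\#}=t$ in a Boolean algebra gives $s=s^\sim$.
\end{itemize}

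\emph{(2)$\Rightarrow$(3).} Lifted-fit implies lifted-subfit: if $s^\sim=1$ then $s=s^\sim=1$. (This is already the arrow in the summary diagram.)

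\emph{(3)$\Rightarrow$(1).} Assume $K$ is lifted-subfit and Boolean. By Corollary \ref{coro subfit suplements}, $s^\Box=s^{\#\#}$ for every $s$. In a Boolean coframe $s^\#$ is the complement of $s$, so $s^{\#\#}=s$ by property (8). Hence $s^\Box=s$.

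The main obstacle is the Booleanness in (1)$\Rightarrow$(2): we must show every element is complemented from the hypothesis $s^\Box=s$ alone. The argument decomposing $s\wedge s^\#$ into elements of $\mc$ below both $s$ and $s^\#$ handles this. It relies on $s=s^{\#\#}$ together with the complementedness of each $c\in\mc$, and this step needs to be written out carefully.
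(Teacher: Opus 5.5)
Your proof is correct and follows the paper's own argument. The chain $s=s^{\Box}=s^{\#\#\Box}=s^{\#\#}$ forces $K=K_\#$ to be Boolean; Proposition~\ref{boxsim} (your Corollary~\ref{sup}(3)) then gives $s^\#=s^{\sim\#}$ and hence $s=s^\sim$; and (3)$\Rightarrow$(1) is Corollary~\ref{coro subfit suplements} combined with $s^{\#\#}=s$. Your extra direct check that $s\wedge s^\#=0$ is also valid, but the step $c\le s^\#\Rightarrow s\le c^\#$ does not come from property (7), which presupposes $s$ complemented. It follows instead from antitonicity of $\#$ together with $s=s^{\#\#}$, and it is even quicker to note that $c\le s$ gives $c\le s^\#\le c^\#$.
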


\begin{proof}
    $(1)\Rightarrow (2)$: Take $s\in K$. By Corollary \ref{sup}, we have that
    $s=s^\square=s^{\#\# \square}=s^{\#\#}.$
   
   Since this holds for every $s\in K$, $K$ must be Boolean. Because $s$ is complemented it follows from Proposition \ref{boxsim} that
    $$s^\#=(s^\#)^\square=s^{\#\#\sim \#}=s^{\sim\#}.$$
    Since $s$ and $s^\sim$ are both complemented, this implies that $s=s^\sim$.

    $(2)\Rightarrow (3)$: It is trivial.

    $(3)\Rightarrow (1)$: By Corollary \ref{coro subfit suplements} we obtain
    $s^\square=s^{\#\#}=s.$ 
\end{proof}

\begin{rema}
    Recall that by Propositions \ref{t1 subfit} and \ref{prop fit top} we know that, given a topological space $(X, \, \tau)$, $\mathcal{P}(X)$ is $\tau$-lifted-subfit if and only if $\mathcal{P}(X)$ is $\tau$-lifted-fit. Since $\mathcal{P}(X)$ is  Boolean,  this equivalence is a particular case of the previous proposition.
\end{rema}

\section{The heredity of the separation condition}\label{sec heredi}
It is well-known that the separation conditions on a locale $L$ tend to be characterized by the separation conditions on its sublocales. In fact:
\begin{prop}[\cite{PPsep}]\label{prop sep sf in sublocales}
    A locale $L$ is subfit if and only if every one of its \emph{open} sublocales is subfit.
\end{prop}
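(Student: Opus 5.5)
The implication ``every open sublocale of $L$ is subfit $\Rightarrow$ $L$ is subfit'' is immediate, since $L=\of(1)$ is itself an open sublocale. So the work is in the converse. Assume $L$ is subfit and fix $a\in L$. I would prove that the locale $\of a$ is subfit by checking condition (3) of Corollary \ref{coro subfit locales} for $\of a$, that is, that $\SL(\of a)$ is lifted-subfit with respect to its own open sublocales. The tools are Lemma \ref{lemasubfit}, applied in the lifted-subfit coframe $\SL(L)$, and the fact that $\of a$ is complemented by $\cf a$.

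First, I would identify the ambient data. The sublocales of $\of a$ are exactly the sublocales of $L$ contained in $\of a$, so $\SL(\of a)$ is the principal down-set ${\downarrow}\of a$ in $\SL(L)$. I will use the standard fact from \cite{PP} that the open sublocales of a sublocale $S\subseteq L$ are precisely the sets $S\cap\of b$ with $b\in L$. For $S=\of a$ these are the sets $\of a\cap\of b=\of(a\wedge b)$. Writing $\widetilde{T}$ for the $\of$-closure of $T\in\SL(\of a)$ computed inside $\of a$, we then get
$$\widetilde{T}=\tbigcap\{\of a\cap\of b\mid T\subseteq \of b\}=\of a\cap T^\sim .$$
Here $T^\sim$ is the $\of$-closure in $\SL(L)$, and the equality holds because $T\subseteq\of a\cap\of b$ if and only if $T\subseteq\of b$.

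Next comes the main computation. Suppose $T\in\SL(\of a)$ satisfies $\widetilde T=\of a$; we must show $T=\of a$. From $\of a\cap T^\sim=\of a$ we get $\of a\subseteq T^\sim$. Since $\of a\vee\cf a=L$, this gives $T^\sim\vee\cf a=L$. As $\SL(L)$ is lifted-subfit, Lemma \ref{lemasubfit} yields $T\vee\cf a=L$. Now $\of a$ is complemented, so by the distributivity lemma for complemented elements,
$$\of a=\of a\cap(T\vee\cf a)=(\of a\cap T)\vee(\of a\cap\cf a)=\of a\cap T=T.$$
The converse implication ($T=\of a$ gives $\widetilde T=\of a$) is trivial. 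Hence $\SL(\of a)$ is lifted-subfit, and $\of a$ is subfit by Corollary \ref{coro subfit locales}.

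The step I expect to need the most care is the identification of the open sublocales of $\of a$, and hence the formula $\widetilde T=\of a\cap T^\sim$. Once that formula is in place, the argument is a direct calculation in $\SL(L)$.
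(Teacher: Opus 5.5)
Your proof is correct and follows essentially the paper's route. The paper proves the general fact that $K$ is $\mo$-lifted-subfit iff ${\downarrow}a$ is $\mo_a$-lifted-subfit for every $a\in\mo$, using the relative-closure formula $t^{\sim_s}=s\wedge t^\sim$ (your $\widetilde T=\of a\cap T^\sim$) together with Corollary \ref{coro subfit in the opens}; your chain $\of a\subseteq T^\sim\Rightarrow T^\sim\vee\cf a=L\Rightarrow T\vee\cf a=L\Rightarrow T=\of a$ is just that corollary's proof unfolded, which spares you the paper's extra observation that $\of a\in K_\sim$ (used there to reduce $\of a\cap T^\sim$ to $T^\sim$).
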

As for fitness, we have:
\begin{prop}[\cite{PPsep}]\label{prop sep f in sublocales}
    A locale $L$ is fit if and only if every one of its sublocales is subfit.
\end{prop}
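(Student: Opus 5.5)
The plan is to reduce both directions to the characterizations in Corollaries \ref{coro subfit locales} and \ref{coro fit locales}, applied to $L$ and to its sublocales. The one thing to compare is the $\of$-closure of a sublocale computed inside a sublocale $S$ with the one computed inside $L$. Two standard facts make this comparison possible (see \cite{PP}):
\begin{enumerate}[(a)]
\item the sublocales of $S$ are exactly the sublocales of $L$ contained in $S$, so $\SL(S)$ is the principal down-set ${\downarrow}S$ of $\SL(L)$;
\item the open sublocales of $S$ are exactly the sets $S\cap\of a$ with $a\in L$. Indeed, for $x\in S$ the open sublocale of $S$ determined by $x$ is $S\cap \of x$, and $S\cap\of a=S\cap\of(\nu_S(a))$, because $\nu_S(a)\to s=a\to s$ for $s\in S$.
\end{enumerate}
I expect (b) to be the main obstacle; everything else is a direct computation.

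Write $T^{\sim_S}$ for the $\of$-closure of $T\in\SL(S)$ computed in $S$. The first step is the formula
$$T^{\sim_S}=\tbigcap\{S\cap\of a\mid T\sue S\cap \of a\}=S\cap\tbigcap\{\of a\mid T\sue\of a\}=S\cap T^\sim .$$
The middle equality holds because $T\sue S$, so $T\sue S\cap\of a$ if and only if $T\sue\of a$.

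Assume first that $L$ is fit, and let $S\in\SL(L)$ and $T\in\SL(S)$ with $T^{\sim_S}=S$. By Corollary \ref{coro fit locales} we have $T^\sim=T$. Hence $S=T^{\sim_S}=S\cap T=T$. By Corollary \ref{coro subfit locales}(3), applied to the locale $S$, the sublocale $S$ is subfit.

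Conversely, assume every sublocale of $L$ is subfit, and let $T\in\SL(L)$. Put $S=T^\sim$, which contains $T$. Then $T^{\sim_S}=S\cap T^\sim=S$. Since $S$ is subfit, Corollary \ref{coro subfit locales}(3) gives $T=S=T^\sim$. As $T$ was arbitrary, Corollary \ref{coro fit locales}(3) shows that $L$ is fit.

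This argument can be phrased abstractly: $K$ is lifted-fit if and only if every principal down-set ${\downarrow}s$, with $\mo_s=\{s\wedge a\mid a\in\mo\}$, is lifted-subfit. The identity $t^{\sim_s}=s\wedge t^\sim$ holds verbatim in that setting, and the elements $s\wedge a$ are complemented in ${\downarrow}s$ with complement $s\wedge a^\#$.
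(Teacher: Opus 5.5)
Your proof is correct and takes essentially the same route as the paper. The paper obtains this statement as the case $K=\SL(L)$, $\mo=\of L$ of its Section~\ref{sec heredi} result that $K$ is lifted-fit if and only if every ${\downarrow}s$ is $\mo_s$-lifted-subfit, using the same identity $t^{\sim_s}=s\wedge t^\sim$ and the same choice $s=t^\sim$ for the converse; your verification of fact (b), which the paper simply quotes as $\of S=\{\of a\cap S\mid \of a\in\of L\}$, is also fine.
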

The goal of this section is to generalize these results to our more abstract context. To do so, we start by recalling some basic facts from \cite{PP}.
\begin{prop}
    Let $L$ be a locale and $S\sue L$ a sublocale of $L$. Then $S$ is still a locale and 
    $$\SL(S)=\{T\in \SL(L)\mid T\sue S \}=\downarrow S.$$
    Moreover
    $$\of S=\{\of a \cap S\mid \of a \in \of L\} \qtq{and} \cf S=\{\cf a \cap S\mid \cf a \in \cf L\}.$$
\end{prop}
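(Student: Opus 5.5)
The plan is to reduce all three claims to one observation about Heyting implications into $S$. Write $\nu=\nu_S$ for the nucleus of Theorem \ref{sublocales nucleus}. The key identity is
$$x\to s=\nu(x)\to s\qquad\text{for all } x\in L,\ s\in S .$$
Since $\nu(x)\ge x$, we have $\nu(x)\to s\le x\to s$. For the reverse inequality, suppose $y\wedge x\le s$. Then $\nu(y)\wedge\nu(x)=\nu(y\wedge x)\le\nu(s)=s$, so $y\wedge\nu(x)\le s$. This uses only that $\nu$ preserves binary meets and fixes the elements of $S$. I expect this small computation to be the only real content; everything else is bookkeeping.

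\emph{$S$ is a locale.} By (Sl1), $S$ is closed under all meets of $L$, so $S$ is a complete lattice with the meets of $L$. By (Sl2), for $s,t\in S$ the element $s\to t$ (computed in $L$) lies in $S$. The adjunction $u\wedge s\le t\Leftrightarrow u\le s\to t$ holds for $u\in S$ because it holds in $L$ and binary meets agree. A complete lattice with a Heyting implication satisfies infinite distributivity, since $s\wedge-$ is then a left adjoint and preserves all joins. Hence $S$ is a frame.

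\emph{$\SL(S)=\downarrow S$.} First let $T\in\SL(L)$ with $T\sue S$. Meets in $T$ are the meets of $L$, which are also the meets of $S$, and $s\to t\in T$ for all $s\in S$. So $T\in\SL(S)$. Conversely, let $T\in\SL(S)$. Meets of subsets of $T$ taken in $S$ coincide with those taken in $L$, so (Sl1) holds in $L$. For $x\in L$ and $t\in T\sue S$, the key identity gives $x\to t=\nu(x)\to t$ with $\nu(x)\in S$, and this lies in $T$ because $T$ is a sublocale of $S$. Hence $T\in\SL(L)$.

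\emph{Open and closed sublocales of $S$.} For $b\in S$ we have $\of_S b=\setof{b\to s}{s\in S}$ and $\cf_S b=\setof{s\in S}{s\ge b}$.
\begin{itemize}
\item Open case. Using $b\to(b\to y)=b\to y$, one checks that $\of b=\setof{y\in L}{b\to y=y}$, and this description holds for any $b\in L$. By the key identity, for $b\in L$ we get $\of b\cap S=\setof{s\in S}{\nu(b)\to s=s}=\of_S(\nu(b))$. Conversely, for $b\in S$ we have $\of_S b=\of b\cap S$, since here $\nu(b)=b$.
\item Closed case. For $b\in L$ and $s\in S$ we have $s\ge b\Leftrightarrow s\ge\nu(b)$. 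Hence $\cf b\cap S=\cf_S(\nu(b))$, and for $b\in S$ this gives $\cf_S b=\cf b\cap S$.
\end{itemize}
Taken together, these yield both displayed equalities.
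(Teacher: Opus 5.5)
Your proof is correct. The identity $x\to s=\nu_S(x)\to s$ (for $x\in L$, $s\in S$) is exactly what is needed for $\SL(S)\sue{\downarrow}S$ and for identifying $\of b\cap S$ with $\of_S(\nu_S(b))$. The closed case only needs $s\ge b\Leftrightarrow s\ge\nu_S(b)$ for $s\in S$. In the open case you also tacitly use the fixed-point description $\of_S c=\{s\in S\mid c\to s=s\}$ inside the frame $S$; this is legitimate because you showed that $\to_S$ is the restriction of $\to$.

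The paper gives no proof of this proposition at all. It only recalls it from \cite{PP}, so there is no internal argument to compare with. Yours is essentially the standard one: your key identity is the Frobenius identity $f(f^\ast(x)\to y)=x\to f(y)$ for localic maps, specialized to the embedding $j\colon S\hookrightarrow L$, whose left adjoint is $j^\ast=\nu_S$.

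If you prefer to use tools the paper does recall, the last part also follows from the preimage formulas $f_{-1}[\of b]=\of(f^\ast(b))$ and $f_{-1}[\cf b]=\cf(f^\ast(b))$ applied to $j$. By your second part, $T\cap S\in\SL(S)$ for every $T\in\SL(L)$, so $j_{-1}[T]=T\cap S$. Hence $\of b\cap S=\of_S(\nu_S(b))$ and $\cf b\cap S=\cf_S(\nu_S(b))$, and $\nu_S[L]=S$ then gives both displayed equalities. Your element-wise computation is independent of those formulas, at the cost of the small fixed-point description of $\of b$. The preimage route is shorter but pushes the work into the facts about $f_{-1}$.
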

This motivates us to study how the separation conditions on a coframe $K$ can be characterized by the separation condition on its principal down-sets. 

\bigskip

For every $s\in K$ set
$$\mo_s=\{a\wedge s\mid a \in \mo\}\quad \mbox{ and } \quad \mc_s=\{c\wedge s\mid c \in \mc\}.$$
For each $t\in\, \downarrow s$ we define
\begin{itemize}
    \item the $\mc$-interior of $t$ relative to $s$: $t^{\Box_s}=\tbigvee \{c\in \mc_s \mid c\leq t\};$
    \item the $\mo$-closure of $t$  relative to $s$: $t^{\sim_s}=\tbigwedge \{a\in \mo_s \mid t\leq a\}.$
\end{itemize}
\begin{lemm}
    For every $ s\in K$ and $t\in \,\downarrow s$
    $$t^{\sim_s}=s\wedge t^\sim.$$
In particular, if $s\in K_\sim$
    $$t^{\sim_s}=t^\sim.$$
\end{lemm}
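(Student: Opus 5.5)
We prove $t^{\sim_s}=s\wedge t^\sim$ by unfolding the definitions and using the coframe distributive law; the second statement then follows at once.

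By definition,
$$t^{\sim_s}=\tbigwedge\{a\wedge s\mid a\in\mo,\ t\le a\wedge s\}.$$
Since $t\le s$ by hypothesis, the condition $t\le a\wedge s$ is equivalent to $t\le a$. Hence the index set is exactly $\{a\in\mo\mid t\le a\}$. One subtlety: an element of $\mo_s$ may be written as $a\wedge s$ in several ways. This does no harm. If $b=a\wedge s\in\mo_s$ with $t\le b$, then $t\le a$ for every representative $a$. Conversely, each $a\in\mo$ with $t\le a$ contributes the element $a\wedge s$. So both descriptions produce the same set of elements of $\mo_s$.

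Taking meets,
$$t^{\sim_s}=\tbigwedge\{a\wedge s\mid a\in\mo,\ t\le a\}=s\wedge\tbigwedge\{a\in\mo\mid t\le a\}=s\wedge t^\sim.$$
The middle step only uses associativity and commutativity of arbitrary meets; no distributivity is needed. If the index set were empty, both sides would be interpreted in $\downarrow s$, where the empty meet is $s$. This case cannot occur, however, because $1\in\mo$ always gives $t\le 1$.

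For the particular case, assume $s\in K_\sim$, so $s=s^\sim$ because $\sim$ is idempotent. Monotonicity of $\sim$ and $t\le s$ give $t^\sim\le s^\sim=s$. Therefore $s\wedge t^\sim=t^\sim$, and so $t^{\sim_s}=t^\sim$.

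The only point needing care is the representative issue for elements of $\mo_s$ noted above; everything else is a direct computation.
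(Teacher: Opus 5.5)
Your proof is correct and follows the paper's argument: you unfold the definition, use $t\le s$ to replace the condition $t\le a\wedge s$ by $t\le a$, pull $s$ out of the meet, and then use $t^\sim\le s^\sim=s$ for the particular case. One small inconsistency: your opening sentence cites the coframe distributive law, but, as you note later, only associativity and commutativity of meets are used.
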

\begin{proof}
    Observe that 
    \begin{align*}
        t^{\sim_s}&=\tbigwedge \{a\in \mo_s \mid t\leq a\}
        =\tbigwedge \{a\wedge s \mid t\leq a\wedge s, \; a\in \mo\}
        =s\wedge \tbigwedge \{a\in \mo \mid t\leq a\}
        = s\wedge t^\sim.
    \end{align*}
Suppose now that $s\in K_\sim$. Since $t\le s$, we have $t^\sim\le s^\sim=s$ and therefore
    $$t^{\sim_s}=s\wedge t^\sim=t^\sim.\qedhere$$
\end{proof}

\begin{remas}
    (1): Observe that, by definition, given $s\in K$, $\downarrow s$ is $\mo_s$-lifted-subfit if and only if
    $$t^{\sim_s}=s \Leftrightarrow t=s \quad \mbox{for all } t \in \,\downarrow s.$$
    (2): Observe that, by definition, given $s\in K$, $\downarrow s$ is $\mo_s$-lifted-fit if and only if
    $$t^{\sim_s}=t \quad \mbox{for all } t \in \,\downarrow s.$$
\end{remas}
We can now generalize Proposition \ref{prop sep sf in sublocales} in the following way:
\begin{prop}
    A coframe $K$ is $\mo$-lifted-subfit if and only if $\downarrow a$ is $\mo_a$-lifted-subfit for all $a\in \mo$.
\end{prop}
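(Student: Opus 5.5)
The plan is to treat the two directions separately, using the preceding lemma $t^{\sim_s}=s\wedge t^\sim$ together with the fact that elements of $\mo$ are fixed by $\sim$, so that $a\in K_\sim$.

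\emph{Forward direction.} Assume $K$ is lifted-subfit, fix $a\in\mo$ and $t\le a$. Since $a\in K_\sim$, the preceding lemma gives $t^{\sim_a}=t^\sim$. Hence $t^{\sim_a}=a$ means exactly $t^\sim=a$. Corollary~\ref{coro subfit in the opens} applies because $a\in\mo$, and it yields $t=a$. The converse implication $t=a\Rightarrow t^{\sim_a}=a$ is trivial. By the first of the preceding Remarks, $\downarrow a$ is therefore $\mo_a$-lifted-subfit.

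\emph{Backward direction.} Take $a=1\in\mo$. Then $\downarrow 1=K$, $\mo_1=\mo$, and $t^{\sim_1}=1\wedge t^\sim=t^\sim$. So $\mo_1$-lifted-subfitness of $\downarrow 1$ is literally $\mo$-lifted-subfitness of $K$.

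The only point needing care is the forward step, where one must pass from ``$t^\sim=a$'' to ``$t=a$''. This is exactly why Corollary~\ref{coro subfit in the opens}, rather than the bare definition, is needed. Since that corollary is already available, no real obstacle is expected. One should also note that the setting hypotheses for $\downarrow a$ (complements, closure under finite joins, top in $\mo_a$) are not needed to verify the displayed equivalence itself.
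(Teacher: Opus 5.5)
Your proposal is correct and follows essentially the same route as the paper. For the forward direction you use $a\in K_\sim$, so that $t^{\sim_a}=t^\sim$, and then Corollary~\ref{coro subfit in the opens}; for the converse you specialize to $a=1$.
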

\begin{proof}
    
Let $K$ be $\mo$-lifted-subfit and take $a\in \mo$ and $t\in\, \downarrow a$. Since $a\in K_\sim$, $t^{\sim_a}=t^\sim$, hence we can use Corollary \ref{coro subfit in the opens} to deduce that
$$t^{\sim_a}=a\Leftrightarrow t^\sim = a\Leftrightarrow t=a.$$
Hence $\downarrow a$ is $\mo_a$-lifted-subfit.

For the converse, just take $a=1$.
\end{proof}

We now prove some similar results for lifted-fit coframes.
\begin{prop}
     A coframe $K$ is $\mo$-lifted-fit if and only if $\downarrow s$ is $\mo_s$-lifted-fit for all $s\in K$.
\end{prop}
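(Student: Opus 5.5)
The plan is to use the preceding lemma, which gives $t^{\sim_s}=s\wedge t^\sim$ for every $t\in\,\downarrow s$, together with the second of the preceding remarks. By that remark, $\downarrow s$ is $\mo_s$-lifted-fit exactly when $t^{\sim_s}=t$ for all $t\le s$. So both directions reduce to comparing $s\wedge t^\sim$ with $t$.

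For the forward direction, assume $K$ is lifted-fit, so $t^\sim=t$ for every $t\in K$. Given $s\in K$ and $t\le s$, the lemma yields
$$t^{\sim_s}=s\wedge t^\sim=s\wedge t=t.$$
Hence $\downarrow s$ is $\mo_s$-lifted-fit for every $s$.

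For the converse, take $s=1$. Then $\mo_1=\{a\wedge 1\mid a\in\mo\}=\mo$ and $\downarrow 1=K$, and the lemma gives $t^{\sim_1}=1\wedge t^\sim=t^\sim$. So the hypothesis for $s=1$ says precisely that $t^\sim=t$ for all $t\in K$, which is lifted-fitness of $K$.

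The only point needing care is the bookkeeping: the relative operators on $\downarrow s$ must be computed with respect to $\mo_s$. This is exactly what the setup of the lemma provides. Apart from that the argument is a direct computation, and I expect no real obstacle.
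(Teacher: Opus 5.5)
Your proposal is correct and is essentially the paper's own argument. The forward direction is the same one-line computation $t^{\sim_s}=s\wedge t^\sim=s\wedge t=t$, and the converse takes $s=1$; you additionally spell out that $\mo_1=\mo$ and $t^{\sim_1}=t^\sim$, which the paper leaves implicit.
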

\begin{proof}
    Let $K$ be $\mo$-lifted-fit and take $s\in K$ and $t\in\, \downarrow s$, then
    $t^{\sim_s}=s\wedge t^\sim=s\wedge t= t.$
    Hence $\downarrow s$ is $\mo_s$-lifted-fit.

    For the converse, just take $s=1$.
\end{proof}

\begin{prop}
     A coframe $K$ is $\mo$-lifted-fit if and only if $\downarrow s$ is $\mo_s$-lifted-subfit for all $s\in K$.
\end{prop}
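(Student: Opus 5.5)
Both directions can be handled with the relative closure formula from the preceding lemma, $t^{\sim_s}=s\wedge t^\sim$ for $t\le s$, together with Remarks (1) and (2) describing lifted-subfitness and lifted-fitness of $\downarrow s$.

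For ``only if'', suppose $K$ is lifted-fit. By the previous proposition, $\downarrow s$ is then $\mo_s$-lifted-fit for every $s\in K$, so $t^{\sim_s}=t$ for all $t\le s$. In particular $t^{\sim_s}=s$ forces $t=s$, and the converse implication is trivial because $s^{\sim_s}=s\wedge s^\sim=s$. Hence $\downarrow s$ is $\mo_s$-lifted-subfit. Alternatively one could simply note that the diagram's implication ``lifted-fit $\Rightarrow$ lifted-subfit'' applies inside each $\downarrow s$.

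For ``if'', assume every $\downarrow s$ is $\mo_s$-lifted-subfit, and fix $s\in K$. We want $s^\sim\le s$. The natural trick is to apply the hypothesis not to $\downarrow s$ but to the larger principal down-set $\downarrow s^\sim$, with $t=s$. Since $s\le s^\sim$, the lemma gives
\[
s^{\sim_{s^\sim}}=s^\sim\wedge s^\sim=s^\sim,
\]
which is exactly the top of $\downarrow s^\sim$. Because $\downarrow s^\sim$ is $\mo_{s^\sim}$-lifted-subfit, Remark (1) then yields $s=s^\sim$. As $s$ was arbitrary, $K$ is lifted-fit.

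The main obstacle is choosing the right down-set in the converse direction. Working inside $\downarrow s$ itself gives nothing, since there $s$ is already the top. Once one passes to $\downarrow s^\sim$ (which lies in $K_\sim$, so the special case of the lemma applies directly), the argument becomes a one-line computation.
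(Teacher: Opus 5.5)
Your proof is correct and takes the same approach as the paper. The forward direction goes through the previous proposition: lifted-fitness passes to every $\downarrow s$, and lifted-fit trivially implies lifted-subfit. The converse applies lifted-subfitness of $\downarrow t^\sim$ to $t$, using $t^{\sim_{t^\sim}}=t^\sim$ because $t^\sim\in K_\sim$.
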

\begin{proof}
    
$\Rightarrow:$ Follows immediately from the previous proposition.

$\Leftarrow$: Let $t\in K$ and set $s=t^\sim$. Since $s\in K_\sim$ and $\downarrow s$ is $\mo_s$-subfit it follows that
$$t^\sim=s\Rightarrow t^{\sim_s}=s\Rightarrow t=s \Rightarrow t =t^\sim.$$
Hence $K$ is fit.
\end{proof}

\section{Separation in $\mo$-zero-dimensional coframes}\label{sec 0dim}
By Corollary \ref{coro subfit locales} we know that, if $L$ is a locale, then
\begin{center}
$\SL(L)$ is $\of L$-lifted-subfit if and only if $\SL(L)$ is quasi $\of L$-lifted-subfit.
\end{center}
This does not happen in general. In fact, a coframe can be quasi lifted-subfit but not lifted-subfit (see Remark \ref{rema ex quasi subfit}). This raises the question:
\begin{center}
    ``What is the property of $\SL(L)$ that guarantees the equivalence between $\of L$-lifted-subfitness and quasi $\of L$-lifted-subfitness?"
\end{center}
The main goal of this section is to answer that question. To do so, we recall the following fact about sublocales:
\begin{prop}\label{prop zero dim of sl}
    Let $L$ be a locale and $S\sue L$ a sublocale of $L$. Then
    $$S=\tbigcap\{\of a \vee \cf b \mid S\sue \of a \vee \cf b\}.$$
\end{prop}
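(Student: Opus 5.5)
We must prove $S=\tbigcap\{\of a\vee\cf b\mid S\sue \of a\vee\cf b\}$ for every sublocale $S$ of $L$. The inclusion $\sue$ is trivial, so the work is to show that the intersection $T$ on the right is contained in $S$. The plan is to use the nucleus $\nu=\nu_S$ of Theorem~\ref{sublocales nucleus}. For a given $x\in L$ we produce a single sublocale $\of a\vee\cf b$ that contains $S$ and in which the only candidate for $x$ is $\nu(x)$.

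We first recall a description of $\of a\vee \cf b$. Since $\of a$ and $\cf b$ are complemented, the relevant complement is $\cf a\cap \of b$. We have $S\sue \of a\vee\cf b$ iff $S\cap(\cf a\cap\of b)$ is trivial, or equivalently $S\cap \cf a\sue \cf b$. Rather than work through complements, we will use the concrete description of joins: the elements of $\of a\vee \cf b$ are the meets $(a\to y)\wedge z$ with $z\ge b$. Equivalently, its nucleus is $y\mapsto (a\to y)\wedge(b\vee y)$, which is the standard nucleus of $\of a\vee\cf b$ since $\nu_{\of a}(y)=a\to y$ and $\nu_{\cf b}(y)=b\vee y$, and the nucleus of a join is the meet of the nuclei.

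Now fix $x\in T$ and choose $a=\nu(x)$ and $b=x$. We check that $S\sue \of a\vee \cf b$. Take $s\in S$. We want $s=(\nu(x)\to s)\wedge(x\vee s)$. The inequality $\le$ holds since $s\le \nu(x)\to s$ and $s\le x\vee s$. For $\ge$, put $w=(\nu(x)\to s)\wedge(x\vee s)$. Then $w\wedge \nu(x)\le s$. Also $w\le x\vee s\le \nu(x)\vee s$. Hence $w=w\wedge(\nu(x)\vee s)=(w\wedge\nu(x))\vee(w\wedge s)\le s$. So $S\sue\of{\nu(x)}\vee\cf x$, and therefore $x\in T\sue \of{\nu(x)}\vee\cf x$.

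This gives $x=(\nu(x)\to x)\wedge(x\vee x)$, which is only a tautology, since $x\le \nu(x)\to x$. The main obstacle is exactly this: the choice $b=x$ loses information. So we instead take $b=\nu(x)$ and $a=\nu(x)$, that is, the sublocale $\of{\nu(x)}\vee\cf{\nu(x)}$.

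With this choice, $S\sue$ it, by the same computation with $x$ replaced by $\nu(x)$ (note $w\le \nu(x)\vee s$ directly). Since $x\in T$, we get $x=(\nu(x)\to x)\wedge(\nu(x)\vee x)=(\nu(x)\to x)\wedge\nu(x)=\nu(x)\wedge x$, which is again trivial. The fix is to combine the two pieces of information via the nucleus of $T$. Write $\mu=\nu_T$. From $T\sue \of{\nu(y)}\vee\cf{\nu(y)}$ we get $\mu(y)\le (\nu(y)\to y)\wedge \nu(y)$ for every $y$, since $\mu$ is above the nucleus of any larger sublocale. In particular $\mu(y)\le\nu(y)$ for all $y$, so $\mu\le\nu$ and hence $T\sue S$.

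The step to double-check is the claim that the nucleus of $\of a\vee\cf b$ is the pointwise meet of the two nuclei, together with the containment reversal for nuclei ($S_1\sue S_2\Rightarrow \nu_{S_2}\le\nu_{S_1}$). Both are standard facts from \cite{PP}.
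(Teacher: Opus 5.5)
\textbf{There is a genuine gap.} Both sublocales you test against are all of $L$. Since $\of b\sue\of a$ whenever $b\le a$, we get $\of a\vee\cf b\supseteq\of b\vee\cf b=L$ for $b\le a$. Hence $\of(\nu(x))\vee\cf x=\of(\nu(x))\vee\cf(\nu(x))=L$. This is why your verification of $S\sue\of(\nu(x))\vee\cf x$ never used $s\in S$, and why both resulting identities are tautologies.

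The closing ``fix'' does not repair this, because both nucleus comparisons run the wrong way. From $T\sue U$ one gets $\nu_T\ge\nu_U$; you say so yourself ($\mu$ is \emph{above}). So $T\sue\of(\nu(y))\vee\cf(\nu(y))$ only yields $\mu(y)\ge(\nu(y)\to y)\wedge(\nu(y)\vee y)=y$, which is vacuous. The inequality you wrote, $\mu(y)\le(\nu(y)\to y)\wedge\nu(y)=y$, would force $T=L$ for every $S$. And even if $\mu\le\nu$ held, it would give $S\sue T$, the trivial inclusion. To get $T\sue S$ you need $\nu\le\mu$.

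The missing idea is to swap the two parameters: take $a=x$ and $b=\nu(x)$.
\begin{enumerate}
\item For $s\in S$ we have $x\to s\in S$ by (Sl2). Hence $(x\to s)\wedge\nu(x)=\nu(x\to s)\wedge\nu(x)=\nu((x\to s)\wedge x)\le\nu(s)=s$.
\item Your distributivity computation then gives $s=(x\to s)\wedge(\nu(x)\vee s)$, which is a meet of an element of $\of x$ and an element of $\cf(\nu(x))$. So $S\sue\of x\vee\cf(\nu(x))$, and this time membership in $S$ is genuinely used.
\item Therefore $T\sue\of x\vee\cf(\nu(x))$ for every $x\in L$.
\item If $x\in T$, then $x$ is fixed by the nucleus $y\mapsto(x\to y)\wedge(\nu(x)\vee y)$ of that sublocale. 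That is, $x=(x\to x)\wedge(\nu(x)\vee x)=\nu(x)$, so $x\in S$.
\end{enumerate}
The paper gives no proof of this proposition; it simply recalls it as a known fact about sublocales. The argument above is the standard one.
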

 This property of $\SL(L)$ motivates the following definition.
\begin{defin}
    We say that a coframe $K$ is $\mo$-zero-dimensional if for every $s\in K$ there exists $a_i, b_i\in \mo$ such that
    $$s=\tbigwedge_{i\in I}( a_i \vee b_i^\#).$$
\end{defin}
It is clear that for every locale $L$, $\SL (L)$ is $\of L$-zero-dimensional.

The following results concern the study of the separation conditions on $\mo$-zero-dimensional coframe.
\begin{prop}\label{prop subfit zero sim}
Let $K$ be a $\mo$-zero-dimensional coframe. Then the following statements are equivalent:
\begin{enumerate}
    \item $K$ is $\mo$-lifted-subfit.
    \item  For all $a, b \in \mo$ we have
    $$a\vee b^{\#\sim}=1 \Rightarrow a\vee b^\#= 1.$$
    \item  For all $a, b \in \mo$ we have
    $$ b^{\Box}\le a \Rightarrow  b\le a.$$
\end{enumerate}
\end{prop}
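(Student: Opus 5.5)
The plan is to prove $(1)\Rightarrow(2)\Rightarrow(1)$ and, separately, $(2)\Leftrightarrow(3)$. Only the step $(2)\Rightarrow(1)$ uses $\mo$-zero-dimensionality; the rest holds in any coframe.

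\emph{$(1)\Rightarrow(2)$.} This is immediate from Lemma \ref{lemasubfit}. Take $s=b^\#$ and $t=a$ there. It gives $b^{\#\sim}\vee a=1 \Leftrightarrow b^\#\vee a=1$, which is more than (2) asks.

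\emph{$(2)\Leftrightarrow(3)$.} For fixed $a,b\in\mo$, I will show that the two implications are literally the same statement, by rewriting both inequalities in (3) as joins equal to $1$. Two facts are used.
\begin{itemize}
\item By Proposition \ref{boxsim}, $b^\Box=(b^{\#\sim})^\#$.
\item By property (1) of supplements, $x^\#\le y \Leftrightarrow x\vee y=1$. Applied to $x=b^{\#\sim}$, this gives $b^\Box\le a \Leftrightarrow b^{\#\sim}\vee a=1$.
\end{itemize}
Since $b\in\mo$ is complemented, $b=b^{\#\#}=(b^\#)^\#$. The same property then gives $b\le a\Leftrightarrow b^\#\vee a=1$. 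Substituting these two equivalences into (3) turns it into (2).

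\emph{$(2)\Rightarrow(1)$.} This is the only step that needs the hypothesis on $K$, and I expect it to be the main point. Let $s\in K$ with $s^\sim=1$; we must show $s=1$, since the converse implication $s=1\Rightarrow s^\sim=1$ is trivial.
\begin{itemize}
\item By $\mo$-zero-dimensionality, write $s=\tbigwedge_i(a_i\vee b_i^\#)$ with $a_i,b_i\in\mo$.
\item Because $\sim$ is monotone, Proposition \ref{prop sim distr} holds, and $a_i^\sim=a_i$ for $a_i\in\mo$, we get
$$1=s^\sim\le (a_i\vee b_i^\#)^\sim=a_i\vee b_i^{\#\sim}$$
for every $i$.
\item Hypothesis (2) then yields $a_i\vee b_i^\#=1$ for every $i$.
\item Hence $s=\tbigwedge_i 1=1$.
\end{itemize}
So $K$ is $\mo$-lifted-subfit.
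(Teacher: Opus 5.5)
Your proof is correct and follows the paper's route. The paper also gets $(1)\Rightarrow(2)$ from Lemma \ref{lemasubfit}, proves $(2)\Rightarrow(1)$ by writing $s=\tbigwedge_i(a_i\vee b_i^\#)$ and pushing $\sim$ through each meetand, and gets $(2)\Leftrightarrow(3)$ from $b^\Box=b^{\#\sim\#}$ and complementedness of $b$; you only spell out what it leaves implicit (Proposition \ref{prop sim distr}, $a_i^\sim=a_i$, and $x^\#\le y\Leftrightarrow x\vee y=1$).
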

\begin{proof}
    $(1)\Rightarrow(2)$: Follows immediately from Lemma \ref{lemasubfit}.

    $(2)\Rightarrow(1)$: Let $s\in K$ and take $a_i, b_i\in \mo$ such that
    $s=\tbigwedge_{i\in I} (a_i \vee b_i^\#).$
    
    Start by observing that
    $$s^\sim =\left(\tbigwedge_{i\in I} a_i \vee b_i^\#\right)^\sim\le \tbigwedge_{i\in I} (a_i \vee b_i^\#)^\sim. $$
    Therefore, it follows that
    \begin{align*}
        s^\sim=1& \Rightarrow \tbigwedge_{i\in I} (a_i \vee b_i^\#)^\sim =1\\
                & \Rightarrow a_i \vee b_i^{\#\sim} =1\,\text{ for all } i\in I\\
                & \Rightarrow a_i \vee b_i^\# =1\, \text{ for all } i\in I.
    \end{align*}
    Hence
 $s=\tbigwedge_{i\in I} (a_i \vee b_i^\#)=1.$
 
 $(2)\Leftrightarrow(3)$: Follows immediately from the equality $b^{\#\sim \#}=b^\Box$ and the fact that $b$ is complemeted.
\end{proof}

It now follows as a corollary that:
\begin{coro} Let $K$ be a $\mo$-zero-dimensional coframe. Then the following statements are equivalent:
\begin{enumerate}
    \item $K$ is $\mo$-lifted-subfit.
    \item $K$ is quasi $\mo$-lifted-subfit.
    \item $\mo$ (viewed as subposet of $K$) is subfit.
\end{enumerate}
\end{coro}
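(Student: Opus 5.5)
We will close the cycle of implications $(1)\Rightarrow(2)\Rightarrow(3)\Rightarrow(1)$. The first two steps are already in the paper; only the last uses zero-dimensionality.

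The step $(1)\Rightarrow(2)$ is exactly Corollary \ref{coro ls implies qlf}. The step $(2)\Rightarrow(3)$ is Corollary \ref{coro qsf implies mosf}. Neither uses $\mo$-zero-dimensionality.

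For $(3)\Rightarrow(1)$ we invoke Proposition \ref{prop subfit zero sim}. Since $K$ is $\mo$-zero-dimensional, it suffices to verify its condition (3): for all $a,b\in\mo$, $b^\Box\le a$ implies $b\le a$.

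Proposition \ref{prop subfit in mo} says that subfitness of $\mo$ is equivalent to
$$a^\Box\le b^\Box\Rightarrow a\le b \quad\mbox{for all } a,b\in\mo.$$
So, given $b^\Box\le a$ with $a,b\in\mo$, monotonicity and idempotency of the interior operator $\Box$ give
$$b^\Box=b^{\Box\Box}\le a^\Box.$$
Applying that equivalence with the roles of $a$ and $b$ swapped then yields $b\le a$. Hence condition (3) of Proposition \ref{prop subfit zero sim} holds, and therefore $K$ is $\mo$-lifted-subfit.

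The only point needing care is this last step. The hypothesis $b^\Box\le a$ must first be upgraded to $b^\Box\le a^\Box$ before Proposition \ref{prop subfit in mo} applies, and that is exactly what idempotency of $\Box$ provides. Everything else is a direct citation of earlier results.
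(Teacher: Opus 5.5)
Your proof is correct and follows the paper's argument: you use the same cycle $(1)\Rightarrow(2)\Rightarrow(3)\Rightarrow(1)$ with the same citations, and you obtain $(3)\Rightarrow(1)$ from Propositions \ref{prop subfit in mo} and \ref{prop subfit zero sim}. Your step from $b^\Box\le a$ to $b^\Box\le a^\Box$, using monotonicity and idempotency of $\Box$, correctly supplies the detail that the paper leaves implicit.
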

\begin{proof}
    $(1)\Rightarrow (2)$: Corollary \ref{coro ls implies qlf}.

    $(2)\Rightarrow (3)$:  Corollary \ref{coro qsf implies mosf}.

    $(3)\Rightarrow (1)$: Follows from Propositions \ref{prop subfit in mo} and \ref{prop subfit zero sim}.
\end{proof}

\begin{rema}
    The $\mo$-zero-dimensionality of $K$ plays a fundamental role here as it allows us to write every element $s\in K$ in terms of elements in $\mo$. This enables us to describe a property about all $s\in K$ (namely $\mo$-lifted-subfitness) in terms of the properties of the elements of $\mo$ (namely $\mo$ being subfit).
\end{rema}
We now prove a similar result for lifted-fit coframes.
\begin{prop}
A $\mo$-zero-dimensional coframe $K$ is $\mo$-lifted-fit if and only if it is quasi $\mo$-lifted-fit.
\end{prop}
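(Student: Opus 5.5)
The forward implication is immediate and needs no zero-dimensionality. If $K$ is lifted-fit, then $s^\sim=s$ for every $s\in K$, in particular for every $c\in\mc$. (This is also the chain $(1)\Rightarrow(4)$ of the earlier proposition.)

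For the converse, assume $K$ is quasi lifted-fit and $\mo$-zero-dimensional, and fix $s\in K$. Since $\sim$ is a closure operator, $s\le s^\sim$, so it suffices to show $s^\sim\le s$. Write
$$s=\tbigwedge_{i\in I}(a_i\vee b_i^\#)\qquad\text{with } a_i,b_i\in\mo.$$
Because $\sim$ is monotone, $s^\sim\le (a_i\vee b_i^\#)^\sim$ for each $i$, hence
$$s^\sim\le\tbigwedge_{i\in I}(a_i\vee b_i^\#)^\sim.$$
This reduces the problem to the elements $a_i\vee b_i^\#$. For each of them, Proposition \ref{prop sim distr} gives
$$(a_i\vee b_i^\#)^\sim=a_i^\sim\vee b_i^{\#\sim}.$$
Here $a_i^\sim=a_i$ because $a_i\in\mo$, and $b_i^{\#\sim}=b_i^\#$ because $b_i^\#\in\mc$ and $K$ is quasi lifted-fit. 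Therefore $(a_i\vee b_i^\#)^\sim=a_i\vee b_i^\#$ for every $i$, and
$$s^\sim\le\tbigwedge_{i\in I}(a_i\vee b_i^\#)=s.$$
So $s^\sim=s$ for all $s\in K$, i.e.\ $K$ is lifted-fit.

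There is no serious obstacle. The key step is applying the finite-join distributivity of $\sim$ (Proposition \ref{prop sim distr}) to each term $a_i\vee b_i^\#$. For the infinite meet, only the inequality $s^\sim\le\tbigwedge_i(a_i\vee b_i^\#)^\sim$ is needed, and that follows from monotonicity alone.
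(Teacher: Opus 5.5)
Your proof is correct and follows the paper's own argument essentially step for step: write $s=\tbigwedge_i(a_i\vee b_i^\#)$, bound $s^\sim$ by $\tbigwedge_i(a_i\vee b_i^\#)^\sim$ using monotonicity, split each term with Proposition \ref{prop sim distr}, then use $a_i^\sim=a_i$ and quasi lifted-fitness to get $b_i^{\#\sim}=b_i^\#$. You also state explicitly the inequality $s\le s^\sim$, which the paper leaves implicit.
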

\begin{proof}
    
$\Rightarrow$: Trivial.

$\Leftarrow$: Let $s\in K$ and take $a_i, b_i\in \mo$ such that
    $s=\tbigwedge_{i\in I} (a_i \vee b_i^\#).$
    
    Since $b^\#\in \mc$ we can conclude that
   $$s^\sim\leq \tbigwedge_{i\in I}( a_i \vee b_i^\#)^\sim=\tbigwedge_{i\in I} a_i^\sim \vee b_i^{\#\sim}=\tbigwedge_{i\in I} a_i \vee b_i^\#=s. \qedhere$$
\end{proof}


\begin{thebibliography}{99}
\bibitem{C} F. Ciraulo, Kuratowski's problem in constructive topology, {\em Journal of Logic and Analysis} 17 (2025) 1-27.

\bibitem{CPP} M.M. Clementino, J. Picado, A. Pultr, The other closure and complete sublocales, {\em Applied Categorical Structures} 26 (5) (2018) 891--906.

\bibitem{ME} M. Erné, {\em Assemblies of implicative semilattices}, preprint, 2024.

\bibitem{remainders} M.J. Ferreira, J. Picado, S.M. Pinto, Remainders in pointfree topology, {\em Topology and its Applications} 245 (2018) 21--45.

\bibitem{JT84} A. Joyal and M. Tierney,
{\em An Extension of the Galois Theory of Grothendieck},
Memoirs of the American Mathematical Society, vol. 309. AMS, Providence, 1984.

\bibitem{PP} J. Picado and A. Pultr,
        {\em Frames and Locales: topology without points},
    Frontiers in Mathematics 28, Springer, Basel, 2012.

    \bibitem{PPsep} J. Picado and A. Pultr,
        {\em Separation in point-free topology},
    Birkh\"auser/Springer, Cham, (2021).


\bibitem{PPT}   J. Picado, A. Pultr and A.Tozzi,  {\em Joins of closed sublocales}, Houston journal of mathematics, Volume 45, Number 1, 2019, Pages 21–38 
\end{thebibliography}
\end{document}